\documentclass[10pt]{amsart}
\usepackage{amsmath}
\usepackage{amsfonts}
\usepackage{amssymb}
\usepackage{amsthm}
\usepackage{mathrsfs}       
\usepackage{graphicx}     
\usepackage[utf8]{inputenc}
\allowdisplaybreaks

\usepackage{xspace}
\usepackage[colorlinks=true]{hyperref}

\newcommand{\Sph}{\mathbb{S}}
\newcommand{\R}{\mathbb{R}}
\newcommand{\N}{\mathbb{N}}

\newcommand{\Z}{\mathbb{Z}}
\newcommand{\comp}{\mathbb{C}}

\newcommand{\C}{\mathscr{C}}

\newcommand{\ud}{\,\mathrm{d}}

\let \Re \relax
\DeclareMathOperator{\Re}{Re}


\newcommand{\ovl}[1]{\overline{#1}}






\newcommand{\X}{\mathscr{X}}

\newcommand{\floor}[1]{\lfloor #1 \rfloor}




\newtheorem{thm}{THEOREM}[section]
\newtheorem{remark}[thm]{REMARK}
\newtheorem{lemma}[thm]{LEMMA}
\newtheorem{definition}[thm]{DEFINITION}
\newtheorem{proposition}[thm]{PROPOSITION}


\newcounter{thmbiss}


\title[Controllability of a 2D quantum particle in a varying domain]{Controllability of a 
2D quantum particle in a time-varying disc with radial data}

 \author[I.~Moyano]{Iv\'an Moyano}
\begin{address}{Centre de Robotique (CAOR), Mines-ParisTech \\
 60, Bld Saint-Michel,  
 75272, Paris, 
 France \\ 
 and
Laboratoire Jacques-Louis Lions. Universit\'e-Pierre-et-Marie-Curie (UPMC)\\
4, Pl. Jussieu,
75252 Paris, France. \\	 }
  \email{\href{mailto:ivan.moyano@mines-paristech.fr}{ivan.moyano@mines-paristech.fr}}
 \end{address}

\thanks{{\em Addresses:}
\\
Laboratoire Jacques-Louis Lions. Universit\'e-Pierre-et-Marie-Curie (UPMC)\\
4, pl. Jussieu, 
75252, Paris, France, \\
\\
Centre de Robotique (CAOR), Mines-ParisTech \\
60, Bld Saint-Michel,  
75272, Paris, 
France. \\
  }

\begin{document}

\begin{abstract}
In this article we consider a 2-D quantum particle confined a disc whose radius can be deformed continuously in time. 
We study the problem of controllability of such a quantum particle via deformations of the initial disc, i.e., when we set the time-dependent 
radius of the disc to be control variable. We prove that the resulting
system is locally controllable around some radial trajectories which are linear combinations of the first three radial 
eigenfunctions of the Laplacian in the unit disc with Dirichlet boundary conditions. We prove this result, thanks to the linearisation principle,
by studying the linearised system, 
which leads to a moment problem that can be solved using some results from Nonharmonic Fourier series. 
In particular, we have to deal with fine properties of Bessel functions. 
\end{abstract}

\maketitle

\textbf{Keywords--} Schr\"odinger equations; controllability; Control Theory; bilinear control; Bessel functions; Moment problem; Riesz basis; Nonharmonic Fourier series.

\tableofcontents

\section{Introduction}

\subsection{Physical background}
We consider a $d-$dimensional quantum particle, for $d \geq 1$, of mass $m$, under no external forces. 
According to Quantum Mechanics, the state of such a particle can be described by a complex-valued wave-function (see \cite[Secs 2.2.1, 2.2.3]{BasdevantDalibard})
\begin{equation*}
 \psi : \R^+ \times \R^d \rightarrow \comp,  \quad \textrm{ with   } \int_{\R^d} |\psi(t,x)|^2 \ud x = 1\footnote{The measure $|\psi(t,x)|^2\ud x$ is interpreted as a probability density, what explains the constraint.}, \quad \forall t \in \R^+,
\end{equation*} satisfying the Schr\"odinger equation
\begin{equation*}
i \partial_t \psi = -\frac{\hbar}{2m}\Delta_x \psi, \quad (t,x)\in \R^+ \times \R^d,  
\end{equation*} where $\hbar$ stands for normalised the Planck constant. In some instances (e.g., potential wells \cite[Sect.4.3.4]{BasdevantDalibard}), it is possible to confine 
the dynamics of a quantum particle within a region of the space, namely a regular open set $\Omega \subset \R^d$, 
which leads to a boundary-value problem for the associated wave-function, of the form
\begin{equation*}
\left\{ \begin{array}{ll}
i \partial_t \psi = -\frac{\hbar}{2m}\Delta_x \psi, & (t,x)\in \R^+ \times \Omega,  \\
\psi = 0, & (t,x) \in \R^+ \times \partial \Omega,
\end{array} \right.
\end{equation*} and the condition 
\begin{equation*}
\int_{\Omega} |\psi(t,x)|^2 \ud x = 1, \quad \forall t\in \R^+.
\end{equation*} This allows to consider a time-dependent confinement regions, namely a family of smooth open 
sets $\left\{ \Omega(t) \right\}_{t \geq 0}$, varying continuously with respect to time, within which the particle is confined. This question has attracted attention in Physics literature, as the works \cite{Doescher,Pinder,Makowski} or the survey \cite{Knobloch} account for. \par
In terms of the wave function, a quantum particle confined in $\left\{ \Omega(t) \right\}_{t \geq 0}$ 
must satisfy 
\begin{equation}
\int_{\Omega(t)} |\psi(t,x(t))|^2 \ud x =1, \quad \forall t \in \R^+,
\label{eq:conservationdelaprobabilite}
\end{equation}and the Schr\"odinger equation   
\begin{equation}
\left\{ \begin{array}{ll}
i \partial_t \psi = -\frac{\hbar}{2m}\Delta_{x(t)} \psi, & (t,x(t))\in \R^+ \times \Omega(t),  \\
\psi = 0, & (t,x(t)) \in \R^+ \times \partial \Omega(t),
\end{array} \right.
\label{eq:Schrodingergenerique}
\end{equation} which implies that a time-dependent boundary condition must be taken into account. In \cite{BMT, Munier} it is shown that, 
when $d=1$, such a system can be handled by a suitable change of
 variables, which transforms the original problem into a system posed on a fixed domain, with a time-dependent potential. 
 In particular, the works \cite{Miyashita, Aslangul} show that, even in the case $d=1$, keeping the particle
 confined in a time-varying box during the whole time-evolution can be extremely difficult, as some 
 unexpected instability phenomena can appear.  \par

From the perspective of Control of Partial Differential Equations, the seminal paper by P. Rouchon \cite{Rouchon} 
has raised the question of how to find a convenient family of deformations in order to control the dynamics 
of a confined quantum particle, for example to pass from the ground state to an 
excited state in a given time. This problem has been understood in one-dimensional situations, both for the 
linear free evolution (see \cite{KB08}) and the nonlinear regime describing a Bose-Einstein condensate (see \cite{KBHLHT13}). 
The goal of this work is to explore the same question in a two-dimensional setting.

\subsection{Controllability of a 2-D quantum particle confined in a disc via domain deformations}
Let, us consider for some $T^*>0$, $R\in \C^0([0,T^*];\R^+_*)$. We define the time-varying open discs
\begin{equation*}
D_{R(\tau)}:= \left\{ (z,w)\in \R^2; \, z^2 + w^2 < R(\tau)^2 \right\}, \quad \forall \tau \in [0,T^*],
\end{equation*} and we set the Schr\"{o}dinger equation on this variable domain, according to (\ref{eq:Schrodingergenerique}), which 
in adimensionalised form (i.e., we set $m=1$, $h=2$) reads
\begin{equation}
\left\{ \begin{array}{ll}
i\partial_{\tau} \phi = -\Delta_{z(\tau),w(\tau)} \phi, & (\tau,z,w) \in (0,T^*) \times D_{R(\tau)}, \\
\phi = 0 , & (\tau,z,w) \in (0,T^*) \times \partial D_{R(\tau)}. 
\end{array} \right.
\label{eq:Schrodinger}
\end{equation} 
\begin{remark}
An appropriate notion of solution of this problem will be defined in Section \ref{sec: WPSchrodinger}, thanks to a convenient change of variables, described in Section \ref{sec: cvarSchro}, that transforms (\ref{eq:Schrodinger}) into a system set on a fixed domain.   
\end{remark}
This is a control system whose state variable is the wave function $\phi(\tau,z,w)$, which, according to (\ref{eq:conservationdelaprobabilite}), must satisfy 
\begin{equation*}
\int_{D_{R(\tau)}} |\phi (\tau,z,w)|^2 \ud z \ud w = 1, \quad \forall \tau \in [0,T^*].
\end{equation*} We choose the time-dependent radius of the disc $D_{R(\tau)}$ to be the control variable, with the condition
\begin{equation}
R(0) = R(T^*) = 1.
\label{eq:initialandfinalradii}
\end{equation} We are interested in the following notion of controllability.

\begin{definition}[Controllability via domain transformations]
System (\ref{eq:Schrodinger}) is controllable in the space $\mathbb{X}$ if for any $\phi_0,\phi_f \in \mathbb{X}$, there exists $T^*>0$ and $R\in \C^0([0,T^*];\R^+_*)$ satisfying (\ref{eq:initialandfinalradii}) and such that the solution of (\ref{eq:Schrodinger}) with initial datum $\phi_{|t=0} = \phi_0$ satisfies $\phi_{|t=T^*} = \phi_f$.
\label{definition:controllability}
\end{definition}

The controllability of the Schr\"{o}dinger equation via domain transformations has been treated, in the one-dimensional case, by K. Beauchard in \cite{KB08}. 
The goal of this article is to explore the analogous question in the disc, as a first example of a two-dimensional case. Indeed, we shall prove a controllability result, 
according to Definition \ref{definition:controllability}, for regular enough radial data. \par 
More precisely, assuming that all data are radial, system (\ref{eq:Schrodinger}) writes
\begin{equation}
\left\{ \begin{array}{ll}
i \partial_{\tau} \phi = -\Delta_{\rho(\tau)} \phi,  & (\tau, \rho) \in (0,T^*) \times (0,R(\tau)), \\
\phi(\tau, R(\tau)) = 0, & \tau \in (0,T^*),
\end{array} \right.
\label{eq:systemsigmatilde}
\end{equation} where $\Delta_{\rho(\tau)} := \partial_{\rho(\tau)}^2 + \frac{1}{\rho(\tau)}  \partial_{\rho(\tau)} $ is the Laplacian operator in polar 
coordinates with radial data. 

\subsection{Change of variables}
\label{sec: cvarSchro}
Following  \cite{BMT,KB08,KBHLHT13}, let us introduce the new variables
\begin{equation}
\xi(t,r) := \phi(\tau, \rho), \textrm{ with } r:=\frac{\rho}{R(\tau)}, \, t:=\int_0^{\tau}\frac{\ud \sigma}{R(\sigma)^2},
\label{eq:changeofvariables} 
\end{equation} and the change of phase
\begin{equation*}
\psi(t,r) := \xi(t,r) \exp\left( -iu(t)r^2 + 4i \int_0^t u(s) \ud s \right),
\end{equation*} where 
\begin{equation}
u(t) := \frac{1}{4}\dot{R}(\tau)R(\tau), \quad \int_0^T u(s) \ud s.
\label{eq:control}
\end{equation} This change of variables transforms system (\ref{eq:systemsigmatilde}) into the following one, posed on a fixed domain,
\begin{equation}
\left\{ \begin{array}{ll}
i \partial_t \psi = -\Delta_r \psi + \left( \dot{u}(t) - 4u(t)^2 \right) r^2 \psi, & (t,r) \in (0,T)\times(0,1), \\
\psi(t,1) = 0, & t\in(0,T),
\end{array} \right.
\label{eq:systemsigma}
\end{equation} for $T:= \int_0^{T^*} \frac{\ud \sigma}{R(\sigma)^2} $ and 
\begin{equation}
\Delta_r := \partial_r^2 + \frac{1}{r}\partial_r. 
\label{eq:radialLaplacian}
\end{equation}System (\ref{eq:systemsigma}) is a bilinear control system in which the state is the function $\psi$ with $\psi(t) \in \Sph$, for any $t\in [0,T]$, where $\Sph$ is the unit sphere of $L^2(D;\comp)$, and the control is the real-valued function $u \in \dot{H}^1_0(0,T;\R)$, with  
\begin{equation*}
\dot{H}_0^1(0,T;\R) := \left\{ u \in H_0^1(0,T;\R), \, \int_0^T u(s) \ud s = 0  \right\}.
\end{equation*} Thanks to the change of variables described above, we find that the controllability of system (\ref{eq:systemsigma}) implies the controllability of system (\ref{eq:systemsigmatilde}), according to Definition \ref{definition:controllability}, via the application $u \mapsto R$. Indeed, this can be proved thanks to the following result (see \cite[Proposition 1]{KBHLHT13} for a proof).

\begin{proposition}[\cite{KBHLHT13}] Let $T>0$, $u \in L^{\infty}(0,T;\R)$ extended by zero in $(-\infty,0) \cup (T,\infty)$ and such that $\int_0^T u(s) \ud s =0$. The unique maximal solution of the Cauchy problem 
\begin{equation*}
\left\{ \begin{array}{ll}
g'(\tau) = 4e^{-2 \int_0^{g(\tau)} u(s) \ud s}, \\
g(0) = 0,
\end{array} \right.
\end{equation*} is defined for every $\tau \geq 0$, strictly increasing and satisfies 
\begin{equation*}
\lim_{\tau \rightarrow \infty} g(\tau) = + \infty.
\end{equation*} Thus, $T^* = g^{-1}(T)$ is well-defined and if $R$ is defined by 
\begin{equation*}
R(\tau) := e^{\int_0^{g(\tau)} u(s) \ud s},
\end{equation*} then (\ref{eq:initialandfinalradii}) and (\ref{eq:control}) are satisfied.
\label{prop:surjectivity}
\end{proposition}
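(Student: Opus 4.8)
The plan is to treat the Cauchy problem as an autonomous scalar ODE, exploit the special form of its right‑hand side to obtain global existence, uniqueness and strict monotonicity, then deduce that $g$ is an increasing bijection of $[0,\infty)$ (so that $T^{*}=g^{-1}(T)$ makes sense), and finally define $R$ and check (\ref{eq:initialandfinalradii}) and (\ref{eq:control}) by substitution and the chain rule.

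First I would set $U(y):=\int_{0}^{y}u(s)\ud s$. Because $u$ is extended by zero outside $[0,T]$ and $\int_{0}^{T}u=0$, the primitive $U$ vanishes identically on $(-\infty,0]$ and on $[T,\infty)$, and $\|U\|_{L^{\infty}(\R)}\le\|u\|_{L^{1}(0,T)}$. The Cauchy problem then reads $g'(\tau)=F(g(\tau))$, $g(0)=0$, with $F(y):=4e^{-2U(y)}$ a function of $y$ alone that is \emph{globally bounded}, $0<4e^{-2\|U\|_{\infty}}\le F\le 4e^{2\|U\|_{\infty}}$, and \emph{globally Lipschitz} (since $U$ is Lipschitz with constant $\|u\|_{\infty}$ and $y\mapsto e^{-2y}$ is Lipschitz on the compact range of $U$). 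The Cauchy–Lipschitz theorem then yields a unique maximal solution, and boundedness of $F$ forbids escape to infinity in finite time, so $g$ is defined for all $\tau\ge 0$ (in fact on all of $\R$). Moreover $g'(\tau)=F(g(\tau))\ge c_{0}:=4e^{-2\|U\|_{\infty}}>0$, so $g$ is strictly increasing with $g(\tau)\ge c_{0}\tau\to+\infty$. I expect this globality to be the only point that is not completely automatic: it is precisely here that one must use both the zero‑mean normalisation and the zero extension of $u$, without which the exponent in $F$ could be unbounded below and the trajectory could blow up.

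Since $g$ is $\C^{1}$, strictly increasing, with $g(0)=0$ and $g(\tau)\to+\infty$, it is a bijection of $[0,\infty)$ onto itself, hence $g^{-1}$ is defined on $[0,\infty)$ and $T^{*}:=g^{-1}(T)\in(0,\infty)$ for every $T>0$. Then I would set $R(\tau):=e^{U(g(\tau))}$, which is positive and Lipschitz in $\tau$, so $R\in\C^{0}([0,T^{*}];\R^{+}_{*})$. The endpoint values follow from $U(g(0))=U(0)=0$ and $U(g(T^{*}))=U(T)=\int_{0}^{T}u=0$, giving $R(0)=R(T^{*})=1$, which is (\ref{eq:initialandfinalradii}). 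Finally, differentiating, $\dot R(\tau)=u(g(\tau))\,g'(\tau)\,e^{U(g(\tau))}=4u(g(\tau))/R(\tau)$, hence $\tfrac14\dot R(\tau)R(\tau)=u(g(\tau))$; and since $g$ is, by the very form of its defining equation ($g'(\tau)=4/R(\tau)^{2}$), a rescaling of the inverse of the time change $\tau\mapsto t$ of (\ref{eq:changeofvariables}), the right‑hand side is $u$ evaluated at the time $t$ corresponding to $\tau$, which is (\ref{eq:control}). The remaining verification is the same chain‑rule bookkeeping as in \cite{KB08,KBHLHT13}; the only genuine care needed is in tracking the normalising constants relating the two time scales.
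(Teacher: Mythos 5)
The paper itself gives no proof of Proposition \ref{prop:surjectivity}: it is imported from \cite{KBHLHT13} with an explicit pointer to Proposition~1 there, so there is no internal argument to compare against. Your proof is the standard one and is correct in substance. Setting $U(y)=\int_0^y u(s)\ud s$, the zero extension together with $\int_0^T u=0$ makes $U$ bounded and Lipschitz on all of $\R$, hence the autonomous field $F(y)=4e^{-2U(y)}$ is globally Lipschitz and pinched between two positive constants; Cauchy--Lipschitz gives a unique maximal solution, the upper bound on $F$ rules out finite-time escape, and the lower bound $g'\ge c_0>0$ gives strict monotonicity and $g(\tau)\to+\infty$, so $T^*=g^{-1}(T)$ exists; finally $R(0)=e^{U(0)}=1$ and $R(T^*)=e^{U(g(T^*))}=e^{U(T)}=1$, which is (\ref{eq:initialandfinalradii}).

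The one step you do not actually carry out is the verification of (\ref{eq:control}), and it is not quite the harmless bookkeeping you suggest. Your (correct) computation yields $\frac14\dot R(\tau)R(\tau)=u(g(\tau))$, so what remains is to identify $g(\tau)$ with the rescaled time $t(\tau)=\int_0^\tau R(\sigma)^{-2}\ud\sigma$ of (\ref{eq:changeofvariables}). But since $R(\tau)=e^{U(g(\tau))}$, the ODE reads $g'(\tau)=4/R(\tau)^2$, which integrates to $g(\tau)=4t(\tau)$; with the constants exactly as printed one therefore obtains $\frac14\dot R(\tau)R(\tau)=u(4t)$ rather than $u(t)$. This factor of $4$ is an inconsistency in the normalisation that the present paper inherits from \cite{KBHLHT13} (where the time change carries the compensating constant), not an error in your reasoning; still, a complete proof must fix one consistent choice of constants and establish the identification $g=t$ explicitly, because that identification is precisely the content of the final assertion of the proposition, and an appeal to ``the same chain-rule bookkeeping as in \cite{KB08,KBHLHT13}'' leaves exactly that point open.
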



\subsection{Functional setting}
Let $D$ be the unit disc of $\R^2$. We shall work on the space $L^2(D;\comp)$, with the scalar product
\begin{equation}
\langle f, g \rangle_{L^2(D)} := \int_D f(x,y) \overline{g(x,y)} \ud x \ud y, \quad \forall f,g \in L^2(D;\comp).
\label{eq:scalarproduct}
\end{equation} Let $(A,D(A))$ be the operator defined by 
\begin{equation}
\left\{ \begin{array}{ll}
D(A):= H^2 \cap H_0^1(D;\comp), \\
A\psi := -\Delta \psi, \quad \forall \psi \in D(A).
\end{array} \right.
\label{eq:operatorLaplacian}
\end{equation} Let us recall that the eigenfunctions of this operator write, in polar coordinates, as follows (\cite[Ch.6, p.130]{Davies}) 
\begin{equation}
\varphi_{\nu,k}(r,\theta):= \frac{J_{\nu}(j_{\nu,k}r) e^{ik\theta}}{\sqrt{\pi}|J_{\nu+1}(j_{\nu,k})|} , \quad \forall (r,\theta)\in [0,1]\times[0,2\pi),  
\label{eq:eigenfunctions}
\end{equation} for every $(\nu,k)\in \N\times\N^*$, with eigenvalues 
\begin{equation}
\lambda_{\nu,k}:= j_{\nu,k}^2, \quad \forall (\nu,k)\in \N \times \N^*,
\label{eq:eigenvalues}
\end{equation} where $J_{\nu}$ is the Bessel function of the first kind and order $\nu\geq 0$ and $\left\{  j_{\nu,k} \right\}_{k\in \N^*}$ 
is the sequence of its zeros (see Appendix \ref{sec: appendixBessel} for details and notation). \par
Since the radial case will be particularly important in this article, we shall note, for simplicity,
\begin{equation}
\varphi_k: = \varphi_{0,k}, \quad \lambda_k:= \lambda_{0,k}, \quad \forall k \in \N^*.
\label{eq:radialeigenvectors}
\end{equation} Thus, from (\ref{eq:radialLaplacian}), one has
\begin{equation*}
-\Delta_r \varphi_k = \lambda_k \varphi_k, \quad \forall k \in \N^*.
\end{equation*} According to (\ref{eq:operatorLaplacian}), we introduce the spaces
\begin{equation*}
H^s_{(0)}(D;\comp) := D(A^{\frac{s}{2}}), \quad \forall s>0,
\end{equation*} endowed with the norm
\begin{equation}
\|f\|_{H^s_{(0)}} := \left(  \sum_{(\nu,k)\in \N\times \N^*} |j_{\nu,k}^s \langle f, \varphi_{\nu,k} \rangle_{L^2(D)} |^2 \right)^{\frac{1}{2}}, \quad \forall f \in H_{(0)}^s(D;\comp),
\label{eq:norms}
\end{equation} where $\langle \cdot, \cdot \rangle_{L^2(D)} $ is given by (\ref{eq:scalarproduct}). In the case $s=1$, we simply write $H_0^1(D)$, 
as usual, as well as $H^{-1}(D)$ for its dual space. In the radial case, we set
\begin{equation*}
H_{(0),rad}^s (D;\comp) := \left\{ f \in H_{(0)}^s(D;\comp); \, f \textrm{ is radial} \right\}, \quad \forall s> 0, 
\end{equation*} and $L^2_{rad}(D)$ when $s=0$. Furthermore, if $f  \in H_{(0),rad}^s(D;\comp)$, by changing variables, the norm (\ref{eq:norms}) reduces to 
\begin{equation*}
\|f\|_{H^s_{(0),rad}} := \left(  \sum_{k=1}^{\infty} |j_k^s \langle f, \varphi_k \rangle |^2 \right)^{\frac{1}{2}},
\end{equation*}up to a universal constant, with the scalar product
\begin{equation}
\langle f, g \rangle := \int_0^1 f(r) \ovl{g(r)} r \ud r, \quad \forall f,g \in L^2(D) \textrm{ radial}.
\end{equation} We observe that $H_{(0),rad}^s $ is a closed subspace of $H_{(0)}^s$. \par

\subsection{Main result}

The main result of this article is a local exact controllability result of system (\ref{eq:systemsigma}) around a well-chosen trajectory. 
To describe these states, let us introduce the set
\begin{equation}
\mathcal{D}:= \left\{ (\theta_2,\theta_3) \in \R^2; \quad  \theta_2, \theta_3 >0, \, \theta_2 + \theta_3 < 1    \right\},
\label{eq:setomegas}
\end{equation} and the family of states
\begin{equation}
\varphi^{\sharp} := \sqrt{1-\theta_2 -\theta_3} \varphi_1 + \sqrt{\theta_2}\varphi_2 + \sqrt{\theta_3} \varphi_3, \quad (\theta_2, \theta_3) \in \mathcal{D},
\label{eq:phidiege}
\end{equation} according to (\ref{eq:radialeigenvectors}). In this setting, we consider the associated wave packets
\begin{equation}
\psi^{\sharp}_{\tau} := e^{-i\lambda_1 \tau} \sqrt{1- \theta_2 - \theta_3} \varphi_1 + e^{-i\lambda_2 \tau}\sqrt{\theta_2} \varphi_2 + e^{-i\lambda_3 \tau} \sqrt{\theta_3} \varphi_3, \quad \tau \geq 0. 
\label{eq:psidiege}
\end{equation} Thus, $\psi^{\sharp}_0 = \varphi^{\sharp}$ and let $\psi^{\sharp}_t = e^{-it\Delta}\varphi^{\sharp}$, for $t \geq 0$. \par 
With this notation, the main result of this article is the following one.
\begin{thm}
Let $T>0$. There exists $\delta >0$ and a $\C^1-$map 
\begin{equation*}
\Gamma: V_0 \times V_T \rightarrow \dot{H}_0^1 (0,T;\R),
\end{equation*} where 
\begin{align}
V_0 & := \left\{ \psi_0 \in \Sph \cap H_{(0),rad}^3(D;\comp); \, \| \psi_0 - \varphi^{\sharp}  \|_{H_{(0)}^3} < \delta  \right\}, \label{eq:V0}\\
V_T & := \left\{ \psi_f \in \Sph \cap H_{(0),rad}^3(D;\comp); \, \| \psi_f - \psi_{T}^{\sharp}  \|_{H_{(0)}^3} < \delta  \right\}, \label{eq:VT} 
\end{align} such that $\Gamma(\varphi^{\sharp},\psi_{T}^{\sharp})=0$ and for any $(\psi_0,\psi_f) \in V_0 \times V_T$, the solution of (\ref{eq:systemsigma}) with $\psi_{|t=0} = \psi_0$ and control $u = \Gamma(\psi_0,\psi_f)$ satisfies 
\begin{equation*}
\psi_{|t=T} = \psi_f.
\end{equation*}
\label{thm:controllability2d}
\end{thm}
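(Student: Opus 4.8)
The plan is to prove Theorem~\ref{thm:controllability2d} by the classical linearisation principle for bilinear Schr\"odinger control, following the strategy of \cite{KB08,KBHLHT13}. First I would linearise system~(\ref{eq:systemsigma}) around the reference trajectory $\psi^{\sharp}_t = e^{-it\Delta}\varphi^{\sharp}$ with the zero control; writing $\psi = \psi^{\sharp}_t + \Psi$ and $u$ small, the linearised equation takes the form
\begin{equation*}
i\partial_t \Psi = -\Delta_r \Psi + \dot u(t)\, r^2 \psi^{\sharp}_t, \qquad \Psi(t,1)=0, \qquad \Psi_{|t=0}=\Psi_0,
\end{equation*}
where I have already discarded the quadratic term $4u(t)^2 r^2\psi$. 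Expanding $\Psi(t)$ in the radial eigenbasis $\{\varphi_k\}$ and using Duhamel, controlling the linearised system from $\Psi_0$ to $\Psi_f$ in time $T$ is equivalent to a trigonometric moment problem: find $u\in \dot H^1_0(0,T;\R)$ such that, for every $k\ge 1$,
\begin{equation*}
\int_0^T \dot u(t)\, e^{i(\lambda_k-\lambda_j)t}\ud t\, \langle r^2\varphi_j,\varphi_k\rangle = \big(\text{prescribed data}\big)_k,
\end{equation*}
summed over $j\in\{1,2,3\}$ according to the decomposition of $\varphi^{\sharp}$. The first structural point is that the reachable directions are governed by the coupling coefficients $b_{jk}:=\langle r^2\varphi_j,\varphi_k\rangle$ for $j\in\{1,2,3\}$: I must show these are nonzero for all relevant $k$ (and estimate their size from below, polynomially in $k$) using explicit Bessel-function identities for $\int_0^1 J_0(j_j r)J_0(j_k r)r^3\ud r$; this is where the ``fine properties of Bessel functions'' promised in the abstract enter.

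Next I would solve this moment problem. After integrating by parts to move to $u$ itself, the relevant exponents are $\{0\}\cup\{\pm(\lambda_k-\lambda_j): k\ge 1,\ j=1,2,3\}$, i.e.\ differences of squares of Bessel zeros $j_{0,k}$. Using the asymptotics $j_{0,k}=k\pi + O(1)$ one checks the gap condition: the frequencies $\lambda_k-\lambda_1$ are asymptotically $\sim \pi^2(k^2-1)$ and can be shown to be uniformly separated (or at worst finitely-clustered, handled by grouping), so that $\{e^{i(\lambda_k-\lambda_j)t}\}$ forms a Riesz basis of its closed span in $L^2(0,T)$ for every $T>0$ — here I invoke Ingham-type / Beurling--Malliavin results from nonharmonic Fourier series, exactly as advertised. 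This yields solvability of the moment problem with the sharp gain/loss of derivatives, giving a bounded right inverse of the linearised control map from a space compatible with $H^3_{(0),rad}$; crucially one also must verify that the three families $j=1,2,3$ together span the whole complement, which is precisely why $\varphi^{\sharp}$ is taken as a combination of the \emph{first three} radial modes rather than a single eigenfunction (a single mode would leave one uncontrollable direction because $b_{1,1}$ only reaches ``diagonal'' data). I would also have to treat separately the finitely many low-frequency or resonant pairs and the $\dot H^1_0$ constraint $\int_0^T u=0$, which removes exactly the direction conjugate to the phase and is compatible because of the tangency condition at $\psi^{\sharp}$.

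Finally I would upgrade the linear controllability to local controllability of the nonlinear system via the inverse mapping theorem in the spirit of \cite{KB08}. This requires: (i) well-posedness and $\C^1$ dependence of the solution of~(\ref{eq:systemsigma}) on $(u,\psi_0)$ in the $H^3_{(0),rad}$-scale, including the estimate that the quadratic term $u^2 r^2\psi$ and the phase-type nonlinearity are genuinely of higher order in the $\dot H^1_0$-norm of $u$; (ii) the end-point map $\mathcal{F}(u,\psi_0):=\psi_{|t=T}$ is $\C^1$ with differential at $(0,\varphi^{\sharp})$ equal to the linearised control map above, hence surjective onto the tangent space to $\Sph\cap H^3_{(0),rad}$ at $\psi^{\sharp}_T$; (iii) applying the implicit function theorem to produce the $\C^1$ map $\Gamma$ with $\Gamma(\varphi^{\sharp},\psi^{\sharp}_T)=0$. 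I expect the main obstacle to be step~(i)--(ii) together with the Bessel estimates: controlling the nonlinear remainder in the $H^3$-norm with only an $\dot H^1_0$-control forces a careful smoothing/regularity analysis (the $r^2$ multiplier does not commute with $\Delta_r$ and interacts with the boundary condition), and proving the lower bounds $|b_{jk}|\gtrsim k^{-N}$ uniformly requires delicate uniform asymptotics of $J_0$ and its zeros, with the additional subtlety of ruling out accidental vanishing of the finitely many ``small'' coefficients, which may ultimately be the reason the admissible set $\mathcal{D}$ is an open subset rather than all parameter values.
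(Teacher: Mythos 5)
Your proposal follows essentially the same route as the paper: linearisation around $(\psi^{\sharp}_t,u\equiv 0)$, reduction to a trigonometric moment problem whose coupling coefficients $\langle r^2\varphi_j,\varphi_k\rangle$ are controlled via an exact Bessel integral identity (the paper's Lemma \ref{lemma:asymptotics} gives a closed formula, so the coefficients never vanish and satisfy two-sided bounds of order $j_{0,k}^{-3}$), solvability via a Riesz basis of exponentials with frequencies $j_{0,n}^2-j_{0,p}^2$ (uniform gap plus Beurling's density theorem to reach every $T>0$, after checking the non-resonance of these differences), and finally the inverse mapping theorem combined with the $\C^1$ end-point map built on Puel's smoothing effect. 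The only substantive refinement the paper makes beyond your sketch is that it proves the frequencies are genuinely pairwise distinct (no resonant pairs to "group"), and the role of the three modes is to supply enough low-frequency moments to absorb the two constraints $\int_0^T\dot v=0$ and $\int_0^T s\dot v(s)\ud s=0$ coming from $v\in\dot H^1_0$.
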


\begin{remark}
The choice of the states (\ref{eq:phidiege}) will be clear un Section \ref{sec: Schrolinearisecontrolable}, as the choice of 
more straightforward trajectories may lead to a 
linearised system which is not controllable (see Remark \ref{remark:choixdesetatsnaturels} for more details).
\end{remark}


\subsection{Previous work}

The problem of the controllability of a confined quantum particle via domain deformations has been 
solved in the one-dimensional case by K. Beauchard in \cite{KB08} for the free evolution and by 
K. Beauchard, H. Lange and H. Teismann in \cite{KBHLHT13} for a Bose-Einstein condensate. 
In both cases, the problem of controllability via domain transformations can be handled thanks
to a suitable change of variables, which reduces the problem into a bilinear control system under constraints. 
Let us point out that in 1-D the bilinear control of the Schr\"odinger equation has received much attention (we can mention the works 
\cite{Ball, Turinici, Boscain, KB05, KBCLhautes} among others).  
In particular, the techniques developed by K. Beauchard and C. Laurent in \cite{KBCL10} allow to prove local exact controllability 
results thanks to the Inverse Mapping Theorem and a certain smoothing effect. Let us observe that this approach simplifies the original 
proofs in \cite{KB08}, which use the Nash-Moser theorem. \par

Let us point out that, contrarily to the 1D case, in the 2D setting, much less results on bilinear control are known 
(see \cite{KBCL2D,Puelbilineaire}). Let us emphasise that, in particular, the results of \cite{Puelbilineaire} cannot be
applied to our case because of the geometric constraint (\ref{eq:initialandfinalradii}), which imposes a restriction in the control (see \ref{eq:control})  
for the bilinear problem (\ref{eq:systemsigma}). We refer to remark \ref{remark:choixdesetatsnaturels} for more details. 
Consequently, the controllability problem 
via domain deformations in a 2D setting is technically much more involved that in 1D, 
as the geometry of the deformations plays a major role. In the case of the disc, 
we can handle this by exploiting some fine properties of Bessel functions, through spectral decompositions (see Section 
\ref{sec: Schrolinearisecontrolable} for details), which is the major novelty of our work. 
Indeed, this article represents the very first step in the exploration of the 2D problem, which should lead to other developments in the future (see Section \ref{sec: Schrocomments} for comments and perspectives).


\subsection{Strategy and outline}

In this work, we shall follow the linearisation principle to prove a local exact controllability result, exploiting the connection with bilinear control problems under constraints, in the spirit of 
\cite{KB08,KBCL10}. More precisely, the strategy of the proof of Theorem \ref{thm:controllability2d} 
has three main ingredients: 
\begin{itemize}
\item we prove first that the linearised system around $(\psi^{\sharp}_t, u\equiv 0)$ is controllable,
\item secondly, we prove that the end-point map (see Section \ref{sec: endpointmapC1} for the definition) is of class $\C^1$ between some adequate spaces,
\item finally, we deduce the local exact controllability from the Inverse Mapping Theorem.
\end{itemize}

\subsubsection{Outline of the article}
In Section \ref{sec: WPSchrodinger} we recall the well-posedness of system (\ref{eq:systemsigma}) and 
state a smoothing effect. In Section \ref{sec: endpointmapC1} we use the smoothing effect to prove that 
the end-point map is of class $\C^1$. In Section \ref{sec: Schrolinearisecontrolable} we show that the linearised 
system around $(\psi^{\sharp}_t,u\equiv0)$ is controllable, thanks to the resolution of a suitable moment problem, that can be solved
through the construction of an adequate Riesz basis in Section \ref{sec: constructionofaRieszbasisSchrodinger}
and a key asymptotic result proven in Section \ref{sec: preuveformuleBessel}. 
In Section \ref{sec: Schroconclusion} we conclude the proof of Theorem \ref{thm:controllability2d} thanks to the Inverse Mapping theorem. 
In Section \ref{sec: Schrocomments} we gather some comments and perspectives. In Appendix \ref{sec: appendixBessel} we gather some results on Bessel function. 
In Appendix \ref{sec: appendixMoment} we gather some results on abstract and trigonometric moment problems that are useful 
in Section \ref{sec: Schrolinearisecontrolable}.


\section{Well-posedness and smoothing effect}
\label{sec: WPSchrodinger}

The goal of this section is to prove a well-posedness result in an appropriate functional setting for the system
\begin{equation}
\left\{ \begin{array}{ll}
i \partial_t \psi = -\Delta_r \psi + u(t) r^2 \psi + f(t,r), & (t,r) \in (0,T) \times (0,1), \\
\psi(t,1) = 0, & t\in (0,T), 
\end{array} \right.
\label{eq:Cauchyproblem}
\end{equation} where $\Delta_r$ is given by (\ref{eq:radialLaplacian}). \par 
Let us recall that the Schr\"{o}dinger operator $iA$, where $A$ is given by (\ref{eq:operatorLaplacian}), generates a group of isometries in $H^s_{(0)}(D;\comp)$, for $s\geq 0$, that we denote $\left( e^{-it\Delta}\right)_{t\geq 0}$. Furthermore, thanks to (\ref{eq:radialeigenvectors}), for any $\psi_0 \in H^s_{(0),rad}(D;\comp)$, one has 
\begin{equation}
e^{-it\Delta} \psi_0 : = \sum_{k=1}^{\infty} e^{-i\lambda_k t } \langle \psi_0, \varphi_k \rangle \varphi_k.
\label{eq:evolutionlibre}
\end{equation} 

\begin{proposition}
Let $T>0$. For every $\psi_0 \in H_{(0),rad}^3(D)$, $f \in L^2(0,T;H^3 \cap H_{0,rad}^1(D))$, $u \in L^2((0,T);\R)$, there exists a unique weak solution of system (\ref{eq:Cauchyproblem}) with $\psi_{|t=0} = \psi_0$, i.e., $\psi \in \C^0([0,T];H^3_{(0),rad}(D))$ such that 
\begin{equation}
\psi(t) = e^{-i\Delta t}\psi_0 + i \int_0^t e^{-i\Delta (t-s)} \left[ u(s) r^2 \psi + f(s,r)  \right] \ud s, \quad  \forall t\in [0,T].
\label{eq:Duhamel}
\end{equation} Furthermore, for every $M>0$ there exists a constant $C_1 = C_1(T,M)>0$, such that if $\|u\|_{L^2(0,T)}<M$, then
\begin{equation}
\| \psi \|_{\C^0([0,T];H^3_{(0),rad})} \leq C_1(T,M) \left( \| \psi_0 \|_{H^3_{(0),rad}} + \| f \|_{L^2(0,T;H^3 \cap H^1_{0,rad})}   \right),
\label{eq:energyestimate}
\end{equation} and such that $C_1(t,M)$ is uniformly bounded on any bounded interval. Moreover, if $f=0$, we have 
\begin{equation}
\|\psi(t)\|_{L^2(D)} = \|\psi_0 \|_{L^2(D)}, \quad  \forall t\in [0,T].
\label{eq:conservationnormeL2}
\end{equation}
\label{prop:WP}
\end{proposition}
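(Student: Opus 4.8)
The plan is to solve the Cauchy problem in its integral form \eqref{eq:Duhamel} by a fixed‑point argument, in the spirit of \cite{KB08,KBCL10}. Set $X_T:=\C^0([0,T];H^3_{(0),rad}(D))$ and, for $\psi\in X_T$, define
\begin{equation*}
(\Theta\psi)(t):=e^{-i\Delta t}\psi_0+i\int_0^t e^{-i\Delta(t-s)}\bigl[u(s)\,r^2\psi(s)+f(s)\bigr]\ud s,\qquad t\in[0,T].
\end{equation*}
Since $iA$ generates a group of isometries on $H^3_{(0)}(D)$ which leaves the closed subspace $H^3_{(0),rad}(D)$ invariant (see \eqref{eq:evolutionlibre}), the whole difficulty is concentrated in estimating the Duhamel integral. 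Here is the crux of the matter: for $\psi\in H^3_{(0),rad}(D)$ one has $r^2\psi\in H^3(D)$ and $r^2\psi=0$ on $\partial D$, so $r^2\psi\in H^3\cap H^1_{0,rad}(D)$, but $r^2\psi$ does \emph{not} in general belong to $H^3_{(0),rad}(D)=D(A^{3/2})$, because the second boundary condition fails ($\Delta_r(r^2\psi)|_{r=1}=4\psi'(1)$, generically nonzero); the source $f$ likewise only lies in $H^3\cap H^1_{0,rad}$. Hence $e^{-i\Delta t}$ cannot be used as an isometry on the integrand, and one must invoke the hidden‑regularity (``smoothing'') effect of the time integral against the Schr\"odinger group.

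\emph{The smoothing estimate.} The key lemma, which is the disc analogue of \cite[Prop.~2]{KBCL10}, reads: for every $T>0$ there is $C(T)>0$, bounded on bounded sets of $T$, such that for every $g\in L^2(0,T;H^3\cap H^1_{0,rad}(D))$ the map $G:t\mapsto\int_0^t e^{-i\Delta(t-s)}g(s)\,\ud s$ lies in $X_T$ with $\|G\|_{X_T}\le C(T)\,\|g\|_{L^2(0,T;H^3\cap H^1_{0,rad})}$. To prove it I would expand $G(t)$ on the radial eigenbasis $\{\varphi_k\}$: writing $h:=-\Delta_r g$, two radial integrations by parts (the boundary terms vanishing since $g(1)=\varphi_k(1)=0$) give $\lambda_k\langle g(s),\varphi_k\rangle=\langle h(s),\varphi_k\rangle$, so that
\begin{equation*}
j_{0,k}^{3}\,\langle G(t),\varphi_k\rangle=\int_0^t e^{-i\lambda_k(t-s)}\,\lambda_k^{1/2}\langle h(s),\varphi_k\rangle\,\ud s ,
\end{equation*}
and one further integration by parts (Green's formula) splits $\lambda_k^{1/2}\langle h(s),\varphi_k\rangle$ into the interior part $\lambda_k^{-1/2}\langle\nabla h(s),\nabla\varphi_k\rangle_{L^2(D)}$ and the boundary part $-\lambda_k^{-1/2}\int_{\partial D}h(s)\,\partial_n\varphi_k\,\ud\sigma$. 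For the interior part one uses that $\{\nabla\varphi_k/\|\nabla\varphi_k\|_{L^2(D)}\}_k$ is orthonormal in $L^2(D)^2$ and Cauchy--Schwarz in $s$, which bounds its contribution to $\sum_k|j_{0,k}^3\langle G(t),\varphi_k\rangle|^2$ by $T\int_0^T\|\nabla\Delta_r g(s)\|_{L^2(D)}^2\,\ud s\lesssim T\,\|g\|_{L^2(0,T;H^3)}^2$. For the boundary part, radiality and the normalization \eqref{eq:eigenfunctions} together with $J_0'=-J_1$ give $\varphi_k'(1)=-\sgn(J_1(j_{0,k}))\,j_{0,k}/\sqrt{\pi}$, so $\lambda_k^{-1/2}\varphi_k'(1)$ has modulus $1/\sqrt{\pi}$; hence, with $\beta(s):=-(\Delta_r g(s))(1)$, this contribution equals $2\sqrt{\pi}\,\sgn(J_1(j_{0,k}))\int_0^t e^{-i\lambda_k(t-s)}\beta(s)\,\ud s$, and summing its square over $k$ is precisely a Bessel‑sequence estimate for the nonharmonic family $\{e^{i\lambda_k\cdot}\}_k$ in $L^2(0,T)$. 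That estimate holds because the gaps $\lambda_{k+1}-\lambda_k=j_{0,k+1}^2-j_{0,k}^2$ tend to $+\infty$, by the asymptotics of Bessel zeros recalled in Appendix \ref{sec: appendixBessel}; combined with $\|\beta\|_{L^2(0,T)}\lesssim\|g\|_{L^2(0,T;H^3)}$ (one‑dimensional trace inequality) it bounds the boundary contribution as well. Continuity of $t\mapsto G(t)$ into $H^3_{(0),rad}(D)$ then follows because each coefficient is continuous in $t$ and the tail bounds above are uniform in $t$.

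\emph{Conclusion.} Multiplication by $r^2$ is bounded from $H^3_{(0),rad}(D)$ into $H^3\cap H^1_{0,rad}(D)$; let $C_0$ be its norm. Setting $\bar C(T):=\sup_{0\le\tau\le T}C(\tau)<\infty$, the smoothing estimate gives for $0\le a<b\le T$
\begin{equation*}
\|\Theta\psi-\Theta\tilde\psi\|_{X_{[a,b]}}\le\bar C(T)\,C_0\,\|u\|_{L^2(a,b)}\,\|\psi-\tilde\psi\|_{X_{[a,b]}},
\end{equation*}
as well as $\|\Theta\psi\|_{X_{[a,b]}}\le\|\psi(a)\|_{H^3_{(0),rad}}+\bar C(T)C_0\|u\|_{L^2(a,b)}\|\psi\|_{X_{[a,b]}}+\bar C(T)\|f\|_{L^2(a,b;H^3\cap H^1_{0,rad})}$. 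By absolute continuity of $s\mapsto\int_0^s|u|^2$, one partitions $[0,T]$ into $N$ subintervals $I_1,\dots,I_N$, with $N\le 4\bar C(T)^2C_0^2M^2+1$ depending only on $T$ and $M$ (recall $\|u\|_{L^2(0,T)}<M$), on each of which $\|u\|_{L^2(I_j)}\le(2\bar C(T)C_0)^{-1}$; on $X_{I_j}$ then $\Theta$ is a $\tfrac12$‑contraction, yielding a unique solution with $\|\psi\|_{X_{I_j}}\le 2\|\psi(\inf I_j)\|_{H^3_{(0),rad}}+2\bar C(T)\|f\|_{L^2(I_j;H^3\cap H^1_{0,rad})}$. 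Chaining these $N$ estimates (the initial value of each block being the endpoint value of the previous one) produces the solution on $[0,T]$, the identity \eqref{eq:Duhamel}, and the bound \eqref{eq:energyestimate} with $C_1(T,M)$ depending only on $T$ and $M$ and bounded on bounded intervals. Finally, for $f=0$, \eqref{eq:conservationnormeL2} holds because $-\Delta_r+u(t)r^2$ is self‑adjoint on $L^2_{rad}(D)$ with a bounded real potential, hence generates a unitary propagator; equivalently $\tfrac{d}{dt}\|\psi(t)\|_{L^2(D)}^2=2\Re\langle i(\Delta_r\psi-u(t)r^2\psi),\psi\rangle=0$, since $\langle\Delta_r\psi,\psi\rangle$ and $\langle r^2\psi,\psi\rangle$ are real, the differentiation being justified by approximating $\psi_0$ in $H^3_{(0),rad}(D)$ by data in $H^5_{(0),rad}(D)$.

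\emph{Main obstacle.} The heart of the proof is the smoothing estimate and, within it, the boundary term: this is exactly where the analysis parts company with the one‑dimensional case of \cite{KB08}, trigonometric sums being replaced by nonharmonic Fourier series attached to the squared Bessel zeros $\lambda_k=j_{0,k}^2$. The required Bessel‑sequence bound relies on the lacunarity $\lambda_{k+1}-\lambda_k\to\infty$ and on the explicit boundary normalization $\varphi_k'(1)=-\sgn(J_1(j_{0,k}))\,j_{0,k}/\sqrt{\pi}$, both of which are instances of the fine properties of Bessel functions collected in Appendix \ref{sec: appendixBessel} (see also Appendix \ref{sec: appendixMoment} for the moment‑theoretic background).
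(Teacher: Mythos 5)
Your proof is correct in outline and shares the paper's global architecture (Duhamel fixed point in $\C^0([0,T];H^3_{(0),rad})$, a smoothing estimate for sources that are only in $H^3\cap H^1_{0,rad}$ rather than in $D(A^{3/2})$, a contraction on subintervals where $\|u\|_{L^2}$ is small, chaining, and the standard unitarity computation for \eqref{eq:conservationnormeL2}). Where you genuinely diverge is in how the smoothing estimate is obtained. The paper does not prove it from scratch: it invokes Puel's regularity theorem for the Schr\"odinger equation on smooth bounded domains (Proposition \ref{prop:Puel}) and spends its effort verifying the hypotheses \eqref{eq:sourcebonne}--\eqref{eq:sourcemechante}, by splitting $\tilde f=u r^2\psi+f$ as $g+h$ where $g$ solves the bilaplacian problem \eqref{eq:ellipticproblem} with $g=\Delta g=0$ on $\partial D$ (so $g\in L^2(0,T;H^3_{(0)})$ by elliptic regularity) and $h=\tilde f-g$ satisfies $\Delta^2h=0$ with $\Delta h|_{\partial D}\in L^2$ by trace theorems; the estimate \eqref{eq:auxiliaryenergyestimate} then falls out of \eqref{eq:Puelenergyestimate}. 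You instead prove the smoothing lemma directly in the radial setting by expanding on the Bessel eigenbasis, isolating the boundary term $(\Delta_r g)(1)$ after two integrations by parts, and controlling the resulting nonharmonic sums via the Bessel-sequence (upper Ingham) inequality for $\{e^{i\lambda_k t}\}$ with $\lambda_k=j_{0,k}^2$ --- which is exactly the disc analogue of \cite[Prop.~2]{KBCL10} and is legitimate here because the gaps $\lambda_{k+1}-\lambda_k\to\infty$ give a uniform gap, so the upper estimate holds on $(0,T)$ for every $T>0$ (the tools are already in Appendix \ref{sec: appendixMoment}). Your route is self-contained and makes the mechanism of the hidden regularity explicit, at the price of being tied to radial symmetry; the paper's route is shorter as written and rests on a result valid in general smooth domains, which is why it must pass through the $\Delta^2$-decomposition to fit Puel's hypotheses. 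Two small points to tighten if you write this up: state explicitly that the upper (Bessel-sequence) bound requires only the gap condition and no lower bound on $T$, and justify the continuity of $t\mapsto G(t)$ in $H^3_{(0),rad}$ by the uniformity in $t$ of your tail estimates, as you indicate.
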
 The proof of this result relies on the smoothing effect of next section.

\subsection{Smoothing effect}
\label{sec: Schrosmoothing}
As it was shown by K. Beauchard and C. Laurent in the one-dimensional case in \cite[Proposition 2]{KBCL10}, a certain smoothing effect can be expected for $\left( e^{-it\Delta} \right)_{t\geq 0}$ in a suitable functional framework. This has been extended to a large class of smooth domains in any space dimension by J.P. Puel in \cite{Puel}. To be precise, in the case of the unit disc $D\subset \R^2$, let 
\begin{equation}
\left\{ \begin{array}{ll}
i\partial_t \psi = -\Delta \psi + f(t,x,y), & (t,x,y) \in (0,T)\times D, \\
\psi = 0, & (t,x,y) \in (0,T)\times \partial D.
\end{array} \right.
\label{eq:Schrodingersource}
\end{equation} Then, the following  has been proved (see \cite[Theorem 2.1]{Puel}).

\begin{proposition}[\cite{Puel}]
Let $T>0$. For every $\psi_0 \in H_{(0)}^3(D)$ and for every $f = g + h$, where
\begin{equation}
g \in L^1(0,T;H^3_{(0)}(D))
\label{eq:sourcebonne}
\end{equation} and 
\begin{equation}
h \in L^2(0,T;H^2\cap H_0^1(D)), \quad \Delta^2 h =0, \quad \Delta h|_{\partial D} \in L^2(0,T;L^2(\partial D)),
\label{eq:sourcemechante}
\end{equation} the solution of (\ref{eq:Schrodingersource}) with $\psi_{|t=0}= \psi_0$ satisfies $\psi \in \C^0([0,T];H_{(0)}^3(D))$ and there exists a constant $C>0$, independent of $\psi_0$, $g$ or $h$, such that 
\begin{equation}
\|\psi\|_{\C^0([0,T];H_{(0)}^3)} \leq C\left( \|\psi_0\|_{H_{(0)}^3} + \|g\|_{L^1(0,T;H_{(0)}^3)} + \| \Delta h|_{\partial D} \|_{L^2(0,T;L^2(\partial D))} \right).
\label{eq:Puelenergyestimate}
\end{equation}
\label{prop:Puel}
\end{proposition}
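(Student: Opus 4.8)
The plan is to solve the Duhamel formulation (\ref{eq:Duhamel}) by a contraction mapping argument in $\C^0([0,T'];H^3_{(0),rad}(D))$ on short time intervals, using the smoothing estimate of Proposition \ref{prop:Puel} as the fundamental linear estimate, and then to patch the local solutions together to cover $[0,T]$. Concretely, I would introduce the affine map
\begin{equation*}
\mathcal{F}(\psi)(t) := e^{-i\Delta t}\psi_0 + i\int_0^t e^{-i\Delta(t-s)}\left[ u(s) r^2 \psi(s) + f(s) \right]\ud s
\end{equation*}
and seek a fixed point. Since $\mathcal F$ is affine in $\psi$, no ball is needed: it suffices to show that its linear part $\psi\mapsto i\int_0^t e^{-i\Delta(t-s)}u(s)r^2\psi(s)\ud s$ is a contraction on a sufficiently short interval. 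Radiality is preserved throughout, since both $e^{-it\Delta}$ and multiplication by $r^2$ map radial functions to radial functions, so the whole argument stays within the closed subspace $H^3_{(0),rad}(D)$.

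The heart of the matter, and the main obstacle, is that the source term $S(s):=u(s)r^2\psi(s)+f(s)$ does \emph{not} belong to $H^3_{(0)}(D)=D(A^{3/2})$: although $r^2\psi$ and $f$ vanish on $\partial D$, their Laplacians do not, so $S(s)\in H^3\cap H^1_{0,rad}(D)$ only. Indeed, for radial $\psi\in H^3_{(0)}$ one computes $\Delta(r^2\psi)|_{\partial D}=4\,\partial_r\psi|_{\partial D}$, which is generically nonzero. This is exactly why one cannot iterate Duhamel using the mere isometry property of the free group in $H^3_{(0)}$, and why Proposition \ref{prop:Puel} is indispensable. I would therefore split $S=g+h$ as in (\ref{eq:sourcebonne})--(\ref{eq:sourcemechante}): let $h(s)$ be the biharmonic lifting solving $\Delta^2 h=0$, $h|_{\partial D}=0$, $\Delta h|_{\partial D}=\Delta S(s)|_{\partial D}$, which carries the boundary defect of the Laplacian, and set $g:=S-h\in H^3_{(0)}(D)$. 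Using the smoothness of $r^2$ one has the multiplier bound $\|r^2\psi\|_{H^3\cap H^1_0}\lesssim\|\psi\|_{H^3_{(0)}}$, and by the trace theorem $\|\Delta h|_{\partial D}\|_{L^2(\partial D)}=\|\Delta S|_{\partial D}\|_{L^2(\partial D)}\lesssim\|S\|_{H^3\cap H^1_0}$. Feeding this into (\ref{eq:Puelenergyestimate}) yields
\begin{equation*}
\|\mathcal F(\psi)\|_{\C^0([0,T'];H^3_{(0)})}\leq C\Bigl(\|\psi_0\|_{H^3_{(0)}} + \|S\|_{L^1(0,T';H^3\cap H^1_0)} + \|S\|_{L^2(0,T';H^3\cap H^1_0)}\Bigr),
\end{equation*}
and hence, for the difference of two iterates,
\begin{equation*}
\|\mathcal F(\psi_1)-\mathcal F(\psi_2)\|_{\C^0([0,T'];H^3_{(0)})}\leq C\bigl(\|u\|_{L^1(0,T')}+\|u\|_{L^2(0,T')}\bigr)\|\psi_1-\psi_2\|_{\C^0([0,T'];H^3_{(0)})}.
\end{equation*}
By absolute continuity of the integral, $\|u\|_{L^1(0,T')}+\|u\|_{L^2(0,T')}\to 0$ as $T'\to0$, so the map is a contraction on intervals whose length is bounded below in terms of $\|u\|_{L^2(0,T)}$ and $T$ alone; this gives a unique local solution, and since the equation is linear the local solutions patch into a unique $\psi\in\C^0([0,T];H^3_{(0),rad}(D))$ satisfying (\ref{eq:Duhamel}).

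To obtain the energy estimate (\ref{eq:energyestimate}) with a constant uniform on bounded intervals, I would apply the smoothing estimate above on each of the finitely many contraction-length subintervals covering $[0,T]$ and chain the resulting bounds: on each such interval the $H^3_{(0)}$-norm of the solution is controlled by a fixed multiple of its value at the left endpoint plus the local contribution of $f$ (with the $L^1$- and $L^2$-in-time norms of the source handled via the splitting above), and multiplying these factors across the $O(T/M)$-many subintervals yields (\ref{eq:energyestimate}) with $C_1=C_1(T,M)$ depending only on $T$ and an upper bound $M$ for $\|u\|_{L^2(0,T)}$, manifestly nondecreasing in $T$ and hence bounded on bounded intervals. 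Finally, the $L^2$-conservation (\ref{eq:conservationnormeL2}) when $f=0$ follows from
\begin{equation*}
\frac{\ud}{\ud t}\|\psi\|_{L^2}^2=2\Re\langle \partial_t\psi,\psi\rangle=2\Re\langle i(\Delta_r-u(t)r^2)\psi,\psi\rangle=0,
\end{equation*}
since $-\Delta_r$ is self-adjoint and $u(t)r^2$ is real, so the inner product $\langle(\Delta_r-u(t)r^2)\psi,\psi\rangle$ is real and its product with $i$ is purely imaginary; this computation is rigorous because the solution is regular enough ($H^3_{(0),rad}\subset D(A)$ gives a strong solution), or alternatively by approximating $(\psi_0,u)$ by smooth data and passing to the limit.
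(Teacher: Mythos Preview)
Your write-up is not a proof of Proposition~\ref{prop:Puel} at all: that proposition is quoted from \cite{Puel} and the paper does not prove it. What you have written is a proof of Proposition~\ref{prop:WP}, which \emph{uses} Proposition~\ref{prop:Puel} as a black box --- exactly as the paper does.

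Viewed as a proof of Proposition~\ref{prop:WP}, your argument is correct and follows essentially the same route as the paper's: a fixed-point argument in $\C^0([0,T'];H^3_{(0),rad}(D))$, with the crucial step being the decomposition of the source $S=u r^2\psi+f\in H^3\cap H^1_0$ (which fails to lie in $H^3_{(0)}$) into a biharmonic lifting $h$ plus a remainder $g\in H^3_{(0)}$, so that Proposition~\ref{prop:Puel} applies. The only cosmetic difference is that the paper solves for $g$ first via the elliptic problem $\Delta^2 g=\Delta^2\tilde f$, $g=\Delta g=0$ on $\partial D$, and then sets $h:=\tilde f-g$, whereas you solve for $h$ directly; the two are equivalent. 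Your contraction estimate, the patching over subintervals to handle arbitrary $u\in L^2$, and the $L^2$-conservation argument all match the paper's Steps~1--3.
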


\begin{proof}[Proof of Proposition \ref{prop:WP}.] Let $T>0$, $\psi_0 \in H_{(0),rad}^3(D)$ and $f \in L^2(0,T;H^3\cap H_{0,rad}^1(D))$. We consider the map
\begin{equation}
\left| \begin{array}{cccc}
F: & \C^0([0,T];H^3_{(0),rad}(D)) & \rightarrow & \C^0([0,T];H^3_{(0),rad}(D)) \\
   &  \psi   & \mapsto & \xi,
\end{array} \right.
\label{eq:fixedpoint}
\end{equation} where $\xi$ is the solution of
\begin{equation}
\left\{ \begin{array}{ll}
i\partial_t \xi(t,r) = -\Delta_r \xi(t,r) + u(t) r^2 \psi(t,r) + f(t,r), & (t,r) \in (0,T) \times (0,1), \\
\xi(t,1) = 0, & t \in (0,T), \\
\xi(0,r) = \psi_0(r), & r\in (0,1).
\end{array} \right.
\label{eq:fixedpointsystem}
\end{equation} Our aim is to prove that this map has a fixed point. We divide the proof in several steps.

\vspace{0.5em}
\textit{Step 1.} We show that (\ref{eq:fixedpoint}) is well-defined. \par 
By direct computation, we observe that, as $u\in L^2(0,T;\R)$, for every $\psi \in \C^0([0,T];H^3_{(0),rad}(D))$, we have $ u(t) r^2 \psi \in L^2(0,T;H^3\cap H^1_{(0),rad}(D))$. As a result, $\tilde{f} := u(t)r^2 \psi + f$ belongs to $ L^2(0,T;H^3\cap H^1_{(0),rad}(D))$. \par 
We can decompose $\tilde{f}$ as in Proposition {\ref{prop:Puel}}. Indeed, let us consider, for a.e. $t\in (0,T)$ the following elliptic problem
\begin{equation}
\left\{ \begin{array}{ll}
\Delta^2 g(t)  = \Delta^2 \tilde{f}(t), & \textrm{ in } D, \\
g(t) = \Delta g(t) = 0, & \textrm{ on } \partial D,
\end{array} \right.
\label{eq:ellipticproblem}
\end{equation} where $\Delta^2$ stands for the Bilaplacian operator. Since $\Delta^2 \tilde{f}(t) \in H^{-1}(D) $ for a.e. $t\in (0,T)$, by elliptic regularity results (see \cite[Th. 5.1, p. 166]{Lions}), we deduce
\begin{equation}
g \in L^2(0,T;H^3_{(0)}(D)).
\label{eq:hypothesesourcebonne}
\end{equation} Let us define next
\begin{equation}
h:= \tilde{f} - g.
\label{eq:decomposition}
\end{equation} Since $\Delta \tilde{f} \in L^2(0,T;H^1(D))$, and using (\ref{eq:hypothesesourcebonne}), we have
\begin{eqnarray}
&& h\in L^2(0,T;H^2 \cap H_0^1(D)),  \label{eq:hypotesesourcemechante1} \\
&& \Delta_r h \in L^2(0,T;H^1(D)). \label{eq:hypothesesourcemechante2}
\end{eqnarray} Hence, from (\ref{eq:ellipticproblem}), 
\begin{equation}
\Delta^2 h(t)|_{\partial D} = 0  \textrm{ and } h(t)|_{\partial D} = 0, \quad \textrm{a.e } t \in (0,T).
\label{eq:hypothesesourcemechante3}
\end{equation} Moreover, using trace results (see \cite[Th.8.3, p.44]{Lions}), (\ref{eq:hypothesesourcemechante2}) implies 
\begin{equation}
\Delta h \in L^2(0,T;L^2(\partial D)).
\label{eq:hypothesesourcemechante4}
\end{equation} Thanks to (\ref{eq:decomposition}), (\ref{eq:hypothesesourcebonne}) and (\ref{eq:hypotesesourcemechante1})--(\ref{eq:hypothesesourcemechante4}), we can apply Proposition \ref{prop:Puel} to system (\ref{eq:fixedpointsystem}). This implies in particular that, as all data are radial and $\Delta^2$ is invariant by rotations, we deduce $\xi \in \C^0([0,T];H^3_{(0),rad}(D))$.

\vspace{0.5em}
\textit{Step 2.} We derive an appropriate energy estimate for system (\ref{eq:fixedpointsystem}). We claim that 
\begin{equation}
\|\xi\|_{\C^0([0,T];H^3_{(0),rad})} \leq C(T) \left( \|\psi_0\|_{H^3_{(0),rad}} +  \|\tilde{f}\|_{L^2(0,T;H^3\cap H_{(0),rad}^1)} \right),
\label{eq:auxiliaryenergyestimate}
\end{equation} for some constant $C(T)>0$ which is bounded on bounded intervals $(0,T)$. \par
Indeed, according to (\ref{eq:Puelenergyestimate}), we have
\begin{eqnarray}
&& \|\xi\|_{\C^0([0,T];H^3_{(0),rad})} \nonumber \\
&& \quad \leq C \left( \|\psi_0\|_{H^3_{(0),rad}} + \|g\|_{L^1(0,T;H^3_{(0),rad})} + \|\Delta h_{|\partial D}\|_{L^2(0,T)} \right). \nonumber
\end{eqnarray} We treat the two last terms separately. For the first one, we observe that, using (\ref{eq:ellipticproblem}), elliptic regularity (see \cite[Th. 5.1, p. 166]{Lions}) and the Cauchy-Schwarz inequality, it follows

\begin{eqnarray}
\|g\|_{L^1(0,T;H^3_{0,rad})} &\leq & C_1 \|\Delta^2 \tilde{f}\|_{L^1(0,T,H^{-1})} \nonumber \\
& \leq & C_2 \|\tilde{f}\|_{L^1(0,T;H^3\cap H_{0,rad}^1)} \nonumber \\
& \leq & C_3 \sqrt{T}\|\tilde{f}\|_{L^2(0,T;H^3 \cap H_{0,rad}^1)}. \nonumber
\end{eqnarray} For the other term, using (\ref{eq:ellipticproblem}), (\ref{eq:hypothesesourcemechante3}) and the continuity of the trace map (see \cite[Th.8.3, p.44]{Lions}),
\begin{eqnarray}
\|\Delta h_{|\partial D}\|_{L^2(0,T)}  &=& \|\Delta \tilde{f} _{|\partial D} \|_{L^2(0,T)} \nonumber \\
& \leq & C_4 \|\Delta \tilde{f}\|_{L^2(0,T;H^1_{rad}(D))} \nonumber \\
&\leq &  C_5 \| \tilde{f} \|_{L^2(0,T;H^3\cap H_{(0),rad}^1(D)}). \nonumber
\end{eqnarray} Putting these estimates together, we obtain (\ref{eq:auxiliaryenergyestimate}).

\vspace{0.5em}
\textit{Step 3.} We show that $F$ is a contraction in $\C^0([0,T]; H^3_{(0),rad}(D))$. \par
Let $\psi_1, \psi_2 \in \C^0([0,T]; H^3_{(0),rad}(D))$. Then, by linearity of system (\ref{eq:fixedpointsystem}), $\eta:= \psi_1 - \psi_2 $ satisfies 
\begin{equation}
\left\{ \begin{array}{ll}
i\partial_t \eta(t,r) = -\Delta_r \eta(t,r) + u(t) r^2 (\psi_1 - \psi_2)(t,r), & (t,r) \in (0,T) \times (0,1), \\
\eta(t,1) = 0, & t \in (0,T), \\
\eta(0,r) = 0, & r\in (0,1).
\end{array} \right.
\label{eq:contractionsystem}
\end{equation} Using (\ref{eq:auxiliaryenergyestimate}), we deduce
\begin{align*}
& \left\| F[\psi_1] - F[\psi_2] \right\|_{\C^0([0,T];H^3_{(0),rad}(D))} \\
& \quad \quad \quad = \| \eta \|_{\C^0([0,T];H^3_{(0),rad}(D))} \\
& \quad \quad \quad \leq C(T) \| u(t)r^2 (\psi_1 - \psi_2) \|_{L^2(0,T;H^3 \cap H_{0,rad}^1(D))} \\
& \quad \quad \quad \leq C'(T) \|u\|_{L^2(0,T)} \| r^2 (\psi_1 - \psi_2) \|_{L^{\infty}(0,T,H^3 \cap H_{0,rad}^1(D))} \\
& \quad \quad \quad \leq C''(T) \|u\|_{L^2(0,T)} \| \psi_1 - \psi_2 \|_{\C^0([0,T];H^3_{(0),rad}(D))},
\end{align*} where $C''(T)>0$ is a constant which remains bounded on bounded intervals. \par

If $C''(T)\|u \|_{L^2} < 1$, this estimate shows that $F$ is a contraction in the Banach space $\C^0([0,T];H^3_{(0),rad}(D))$,
as $H^3_{(0),rad}(D)$ is closed in $H^3_{(0)}(D)$. The Banach fixed-point theorem gives then the existence of a unique fixed point of $F$.
Moreover, (\ref{eq:auxiliaryenergyestimate}) gives (\ref{eq:energyestimate}) in this case. \par

In order to extend the result to arbitrary $u \in L^2(0,T;\R)$, we choose $N\in \N^*$ and a partition of $[0,T]$,
namely $0=T_0 <T_1 < \dots < T_N =T$ and such that $\|u \|_{L^2(T_i,T_{i+1})}$ is small enough $\forall i \in \left\{ 1,\dots,N  \right\}$.
We then apply the preceding arguments in each interval $[T_i,T_{i+1}]$.\par 

Finally, whenever $f \equiv 0$ and $u\in \C^0([0,T];\R)$, identity (\ref{eq:conservationnormeL2})
follows by classical arguments. This allows to extend (\ref{eq:conservationnormeL2}) to the case $u\in L^2(0,T;\R)$ by density.

\end{proof}



\section{ $\C^1$-regularity of the end-point map}
\label{sec: endpointmapC1}

In order to define the end-point map, we shall need the following definitions. Let, for $s>0$,   
\begin{equation}
\X^s := H^s_{(0),rad}(D;\comp) \cap \Sph.
\label{eq:espaceX}
\end{equation} Setting $T>0$, let us fix $\xi \in \Sph$ and let us consider the tangent space
\begin{equation}
T_{\xi} \Sph := \left\{ f \in L^2(D;\comp); \, \Re \langle f, \xi \rangle_{L^2(D)} = 0 \right\}.
\label{eq:tangent} 
\end{equation} Then, we consider, thanks to Proposition \ref{prop:WP}, the end-point map
\begin{equation}
\left| \begin{array}{cccc}
\Theta_T: & \dot{H}_0^1(0,T;\R) \times \X^3 & \rightarrow & \X^3 \times \X^3, \\
		  & (u,\psi_0) & \mapsto & (\psi_0, \psi_{|t=T}),  
\end{array} \right.
\label{eq:endpointmap}
\end{equation} where $\psi$ is the solution of (\ref{eq:systemsigma}) with control $u$ and initial condition $\psi_0$. \par 
Let 
\begin{equation}
\mathcal{X}_0:= H^3_{(0),rad}(D;\comp) \cap T_{\varphi^{\sharp}}\Sph, \quad \mathcal{X}_T:= H^3_{(0),rad}(D;\comp) \cap T_{\psi^{\sharp}_T} \Sph. 
\label{X0T}
\end{equation} Then, we have the following.

\begin{proposition}
Let $T>0$. The map $\Theta_T$ defined by (\ref{eq:endpointmap}) is of class $\C^1$. Moreover, for all $(v, \Psi_0) \in \dot{H}_0^1(0,T;\R) \times \mathcal{X}_0,$ we have 
\begin{equation}
\ud \Theta_T (0,\varphi^{\sharp}).(v,\Psi_0) = \left( \Psi_0, \Psi_{|t=T}\right) \in \mathcal{X}_0 \times \mathcal{X}_T,
\label{eq:differential}
\end{equation} where $\Psi$ is the solution of the linearised system around $(0,\varphi^{\sharp})$, i.e.,
\begin{equation}
\left\{ \begin{array}{ll}
i \partial_t \Psi = - \Delta_r \Psi + \dot{v}(t)r^2 \psi_t^{\sharp}, & (t,r) \in (0,T)\times(0,1),\\
\Psi(t,1) = 0, & t\in (0,T), \\
\Psi(0,r) = \Psi_0, & r\in (0,1),
\end{array} \right.
\label{eq:linearisedsystem}
\end{equation} and $(\psi_t^{\sharp})_{t\in(0,T)}$ is given by (\ref{eq:psidiege}). 
\label{proposition:C1}
\end{proposition}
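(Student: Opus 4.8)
The plan is to establish $\C^1$-regularity of $\Theta_T$ by viewing the solution map $(u,\psi_0)\mapsto\psi$ as a composition of a bounded bilinear-type operator with the affine Duhamel solution operator, and then compute the differential explicitly by the standard linearisation argument. First I would observe that $\Theta_T$ factors as $(u,\psi_0)\mapsto(\psi_0,\psi_{|t=T})$, where the only nontrivial component is $(u,\psi_0)\mapsto\psi_{|t=T}$, and that $\psi_0\mapsto\psi_0$ is trivially smooth; the target-space constraints are automatic because Proposition~\ref{prop:WP} gives $\psi\in\C^0([0,T];H^3_{(0),rad})$ and conservation of the $L^2$-norm (\ref{eq:conservationnormeL2}) keeps $\psi(t)\in\Sph$. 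So the task reduces to showing that the map $\mathcal{S}:(u,\psi_0)\mapsto\psi$ from $\dot H^1_0(0,T;\R)\times\X^3$ into $\C^0([0,T];H^3_{(0),rad}(D))$ is of class $\C^1$, after which evaluation at $t=T$ (a bounded linear map) preserves $\C^1$-regularity.

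The key analytic input is that the bilinear term is well-behaved on $H^3$. Specifically, for $u\in\dot H^1_0(0,T;\R)$ and $\psi\in\C^0([0,T];H^3_{(0),rad})$, the source term in (\ref{eq:systemsigma}) is $\dot u(t)r^2\psi$ (after the change of variables that produced $\dot u - 4u^2$; here one works with the genuine Duhamel formula), and one checks, as in Step~1 of the proof of Proposition~\ref{prop:WP}, that $(u,\psi)\mapsto u\,r^2\psi$ is continuous bilinear from $L^2(0,T;\R)\times\C^0([0,T];H^3_{(0),rad})$ into $L^2(0,T;H^3\cap H^1_{0,rad}(D))$, using that multiplication by $r^2$ (which vanishes to the right order... actually $r^2$ does not vanish at $r=1$, so one must instead note that $r^2\psi$ still lies in $H^3\cap H^1_{0,rad}$ because $\psi$ and its derivatives up to the relevant order vanish appropriately at the boundary, exactly as invoked in Step~1 of Proposition~\ref{prop:WP}). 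Then the fixed-point map $F$ of (\ref{eq:fixedpoint}), now regarded as depending on the parameter pair $(u,\psi_0)$, is a uniform contraction on bounded sets of the parameter space, and its dependence on $(u,\psi_0)$ is affine (in $\psi_0$) and bilinear-plus-affine (in $u$), hence $\C^\infty$; by the $\C^1$ (indeed analytic) dependence of fixed points on parameters for uniform contractions, $\mathcal{S}$ is $\C^1$. Concretely one can bypass abstract fixed-point-with-parameters machinery and argue directly: given increments $(v,\Psi_0)$, set $\Psi$ to be the solution of (\ref{eq:linearisedsystem}) (well-posed by Proposition~\ref{prop:WP} with source $\dot v\,r^2\psi^\sharp_t\in L^2(0,T;H^3\cap H^1_{0,rad})$, since $\psi^\sharp_t$ is a finite combination of smooth radial eigenfunctions), and show $\|\mathcal{S}(u+v,\psi_0+\Psi_0)-\mathcal{S}(u,\psi_0)-\Psi\|_{\C^0([0,T];H^3_{(0),rad})}=o(\|v\|_{\dot H^1_0}+\|\Psi_0\|_{H^3})$ by writing the difference $w:=\mathcal{S}(u+v,\psi_0+\Psi_0)-\mathcal{S}(u,\psi_0)-\Psi$ as the solution of a Schr\"odinger equation with zero initial data and source of the form $u\,r^2 w + v\,r^2(\mathcal{S}(u+v,\psi_0+\Psi_0)-\psi^\sharp_t) + (\text{lower order})$, then applying the energy estimate (\ref{eq:energyestimate}) together with the Lipschitz continuity of $\mathcal{S}$ (itself a consequence of (\ref{eq:energyestimate}) and the contraction estimate of Step~3). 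Continuity of $(u,\psi_0)\mapsto\ud\mathcal{S}(u,\psi_0)$ follows the same way.

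For the explicit formula (\ref{eq:differential}): differentiating at the base point $(0,\varphi^\sharp)$, the base trajectory is $\psi=e^{-it\Delta}\varphi^\sharp=\psi^\sharp_t$ by (\ref{eq:evolutionlibre}) and (\ref{eq:psidiege}), so the linearised equation has source $\dot v(t)r^2\psi^\sharp_t$ and initial datum $\Psi_0$, which is exactly (\ref{eq:linearisedsystem}); evaluating at $t=T$ gives $\ud\Theta_T(0,\varphi^\sharp).(v,\Psi_0)=(\Psi_0,\Psi_{|t=T})$. It remains to check the membership $(\Psi_0,\Psi_{|t=T})\in\mathcal{X}_0\times\mathcal{X}_T$: that $\Psi_0\in\mathcal{X}_0$ is the hypothesis; that $\Psi_{|t=T}\in\mathcal{X}_T$, i.e. $\Re\langle\Psi_{|t=T},\psi^\sharp_T\rangle_{L^2}=0$, follows by differentiating the constraint $\|\psi(t)\|_{L^2}^2\equiv1$ along the trajectory, or directly: computing $\frac{d}{dt}\Re\langle\Psi(t),\psi^\sharp_t\rangle_{L^2}$ using (\ref{eq:linearisedsystem}) and $i\partial_t\psi^\sharp_t=-\Delta_r\psi^\sharp_t$, the skew-adjoint parts cancel and the real term $\Re\langle\dot v r^2\psi^\sharp_t,\psi^\sharp_t/i\rangle$ vanishes since $\langle r^2\psi^\sharp_t,\psi^\sharp_t\rangle$ is real, whence $\Re\langle\Psi(t),\psi^\sharp_t\rangle$ is constant and equals $\Re\langle\Psi_0,\varphi^\sharp\rangle=0$.

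The main obstacle I anticipate is purely bookkeeping rather than conceptual: making the $H^3$-level bilinear estimate $\|u\,r^2\psi\|_{L^2(0,T;H^3\cap H^1_{0,rad})}\lesssim\|u\|_{L^2}\|\psi\|_{\C^0H^3}$ fully rigorous, since $r^2\psi$ must land in the compatibility-respecting space $H^3\cap H^1_{0,rad}$ — one has to track the boundary behavior of $\psi$ and its radial derivatives and exploit that the multiplier $r^2$ is smooth and that the construction already handled this in Step~1 of Proposition~\ref{prop:WP}. Beyond that, the $\C^1$-regularity is a soft consequence of the contraction-with-parameters structure, and the differential identity is a direct computation; the subtlety about the $\dot u$ versus $u$ in the source (the linearisation of $\dot u(t)-4u(t)^2$ at $u=0$ is $\dot v(t)$) is handled automatically.
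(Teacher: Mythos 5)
Your proposal is correct and follows essentially the same route as the paper, which itself omits the details and refers to the argument of \cite[Proposition 3]{KBCL10} carried out with Proposition \ref{prop:WP} in place of the one-dimensional smoothing estimate: namely, $\C^1$-regularity of the solution map via the Duhamel/contraction structure and the $H^3$-level energy estimate (\ref{eq:energyestimate}), identification of the differential with the solution of (\ref{eq:linearisedsystem}) by estimating the remainder, and the tangency $\Re\langle\Psi_{|t=T},\psi^{\sharp}_T\rangle_{L^2}=0$ by the conservation computation you give. Your explicit attention to the facts that $r^2\psi$ lands only in $H^3\cap H^1_{0,rad}$ (hence the need for Proposition \ref{prop:Puel}) and that the linearisation of $\dot u-4u^2$ at $u\equiv 0$ is $\dot v$ matches exactly the "minor modifications" the paper alludes to.
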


The proof of this result can be carried out as in \cite[Proposition 3, p.531]{KBCL10}, with minor modifications, thanks to Proposition \ref{prop:WP}. We omit the details.


\section{Controllability of the linearised system around $(\varphi^{\sharp}, u\equiv 0)$}
\label{sec: Schrolinearisecontrolable}

The goal of this section is to prove the following result.

\begin{proposition}
Let $T>0$. There exists a continuous linear map 
\begin{equation*}
\begin{array}{llll}
\mathcal{L} :& \mathcal{X}_0 \times \mathcal{X}_T & \rightarrow & \dot{H}^1_0(0,T;\R) \\
   & (\Psi_0, \Psi_f)                   & \mapsto     & v,
\end{array}
\end{equation*} such that for any $\Psi_0 \in \mathcal{X}_0$ and $\Psi_f \in \mathcal{X}_T$, the solution of system (\ref{eq:linearisedsystem}) with initial condition $\Psi_0$ and control $v = \mathcal{L}(\Psi_0,\Psi_f)$ satisfies $\Psi_{|t=T} = \Psi_f$.
\label{prop:linearisedproblemiscontrollable}
\end{proposition}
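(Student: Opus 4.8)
The strategy is the classical moment-method approach to controllability of the linearized Schrödinger equation. First I would expand the solution $\Psi$ of \eqref{eq:linearisedsystem} in the radial eigenbasis $(\varphi_k)_{k\ge 1}$. Writing $\Psi(t) = \sum_k x_k(t)\varphi_k$ and $\psi_t^\sharp = \sum_{j=1}^3 e^{-i\lambda_j t}\sqrt{\theta_j}\,\varphi_j$ (with $\theta_1 := 1-\theta_2-\theta_3$), the Duhamel formula gives, for each $k$,
\begin{equation*}
x_k(T) = e^{-i\lambda_k T}\langle \Psi_0,\varphi_k\rangle - i\sum_{j=1}^3 \sqrt{\theta_j}\, b_{jk}\int_0^T \dot v(t)\, e^{-i\lambda_j t} e^{i\lambda_k(t-T)}\,\mathrm{d}t,
\end{equation*}
where $b_{jk} := \langle r^2\varphi_j,\varphi_k\rangle$. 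Integrating by parts in $t$ (using $v\in\dot H_0^1$, so $v(0)=v(T)=0$), the target condition $\Psi(T)=\Psi_f$ becomes, after dividing by $e^{-i\lambda_k T}$, a moment problem of the form
\begin{equation*}
\sum_{j=1}^3 \sqrt{\theta_j}\, b_{jk}\int_0^T v(t)\, e^{i(\lambda_k-\lambda_j)t}\,\mathrm{d}t = d_k,\qquad k\ge 1,
\end{equation*}
where $d_k := \tfrac{1}{i}\big(e^{i\lambda_k T}\langle\Psi_f,\varphi_k\rangle - \langle\Psi_0,\varphi_k\rangle\big)$ depends continuously and linearly on $(\Psi_0,\Psi_f)$ in the $H^3_{(0),rad}$ topology. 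I would check that the compatibility conditions coming from $\Psi_0,\Psi_f \in T_{\varphi^\sharp}\Sph$, $T_{\psi_T^\sharp}\Sph$ respectively make the equations for $k\in\{1,2,3\}$ consistent (this is where the three-eigenfunction trajectory $\psi^\sharp$ is essential: it produces enough distinct frequencies $\lambda_k-\lambda_j$ so that the relevant finite block is invertible — cf. Remark \ref{remark:choixdesetatsnaturels}).

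Next, the point is that the family of exponentials appearing above, namely $\{e^{i(\lambda_k-\lambda_j)t}\}$ for $j\in\{1,2,3\}$ and $k\ge 1$, after removing finitely many redundancies, must form a Riesz basis (or Riesz sequence) of its closed span in $L^2(0,T;\comp)$. This requires a gap/separation property of the frequencies $\{\lambda_k - \lambda_j\}$, i.e.\ of the differences of squared Bessel zeros $j_{0,k}^2 - j_{0,j}^2$; since $j_{0,k} = k\pi + O(1)$ with a precise asymptotic expansion, one has $\lambda_{k+1}-\lambda_k \sim 2k\pi^2 \to \infty$, which gives more than enough separation. The existence of the biorthogonal family and the resulting continuous linear solution map $v = \mathcal L(\Psi_0,\Psi_f)$ would then follow from the abstract moment-problem results quoted in Appendix \ref{sec: appendixMoment}, provided the right-hand side $(d_k)$ lies in the appropriate weighted $\ell^2$ space — which it does because $\Psi_0,\Psi_f\in H^3_{(0),rad}$ and, crucially, $b_{jk} = \langle r^2\varphi_j,\varphi_k\rangle$ decays like $k^{-3}$ (or better), so that $d_k/(\sqrt{\theta_j}b_{jk})$ remains square-summable against the weights dictated by the $H^3$ norm. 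Finally I would need to arrange that the constructed $v$ actually lies in $\dot H_0^1(0,T;\R)$: reality is obtained by solving the conjugate-symmetric system (the moments for $-k$ are conjugates), the $H^1_0$ regularity comes from the extra decay of $d_k$ afforded by the $H^3$ data together with the $k^{-3}$ behavior of $b_{jk}$, and the zero-mean constraint $\int_0^T v = 0$ is handled by adding one more moment equation (the $k=j$ "diagonal" term contributes the frequency $0$) and checking the enlarged family is still a Riesz sequence.

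The main obstacle — and the reason the paper devotes whole sections (\ref{sec: constructionofaRieszbasisSchrodinger} and \ref{sec: preuveformuleBessel}) to it — is the precise asymptotic behavior of the coupling coefficients $b_{jk} = \langle r^2\varphi_j,\varphi_k\rangle = \int_0^1 r^2 \varphi_j(r)\varphi_k(r)\,r\,\mathrm{d}r$ as $k\to\infty$: one must show not merely that they are nonzero for all $k$ (so that the moment problem is solvable at every frequency) but also obtain their exact decay rate, since the gain/loss of derivatives in the smoothing estimate and the weighted $\ell^2$ mapping properties hinge on the exponent in $b_{jk}\sim c_j k^{-\alpha}$. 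This is a delicate computation with Bessel functions (integrals of products $J_0(j_{0,j}r)J_0(j_{0,k}r)r^3\,\mathrm{d}r$, using recurrence relations, the Lommel integral, and the large-argument asymptotics $J_0(x)\sim\sqrt{2/\pi x}\cos(x-\pi/4)$), and establishing that the leading coefficient never vanishes for $j\in\{1,2,3\}$ is exactly the "fine property of Bessel functions" advertised in the abstract. Once that asymptotic formula is in hand, assembling the Riesz basis and invoking the abstract moment lemmas is comparatively routine.
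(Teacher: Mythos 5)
Your plan follows essentially the same route as the paper: reduce to a trigonometric moment problem for $\dot v$ (the paper keeps $\dot v$ rather than integrating by parts to $v$, but this is cosmetic), treat the low modes $k=1,2,3$ via the tangency conditions, build a Riesz basis of exponentials with frequencies $j_{0,k}^2-j_{0,p}^2$ augmented by $\{1,t\}$ to absorb the $\dot H^1_0$ constraints, and control the coefficients $\langle r^2\varphi_p,\varphi_k\rangle$ -- which the paper computes in closed form via Bessel recurrences and the Lommel integral (Lemma \ref{lemma:asymptotics}), yielding exactly the $j_{0,k}^{-3}$ two-sided bounds and non-vanishing you anticipate. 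The one ingredient you pass over too quickly is the non-resonance property $\lambda_n-\lambda_p\neq\lambda_m-\lambda_q$ for $n\neq m$, $p\neq q\in\{1,2,3\}$ (Proposition \ref{proposition:nonresonance}): the growing gap $\lambda_{k+1}-\lambda_k\to\infty$ alone does not preclude exact coincidences between the three interleaved frequency families, and the paper needs the strict monotonicity of the gaps $j_{0,k+1}-j_{0,k}$ to rule them out.
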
 

The proof of this result relies on the resolution of a suitable moment problem. 
We shall first explain in Section \ref{sec:Heuristicsleadingtoamomentproblem} the heuristics leading to such a moment problem. 
Secondly, we derive in Section \ref{sec:towardstheresolutionofthemomentproblem} the mathematical tools needed to handle 
it, which mainly consist in the construction of a suitable Riesz basis of Nonharmonic Fourier series. 
We finally prove Proposition \ref{prop:linearisedproblemiscontrollable} in Section \ref{sec:resolutionofthemomentproblem} thanks to
the tools developed in the previous sections.

\subsection{Heuristics leading to a moment problem}
\label{sec:Heuristicsleadingtoamomentproblem}

Since (\ref{eq:linearisedsystem}) is a linear system, we may suppose, w.l.o.g., that $\Psi_0 \equiv 0$\footnote{Indeed,
suppose that $\forall \tilde{\Psi}_f \in \mathcal{X}_T$, there exists $v \in \dot{H}_0^1(0,T;\R)$ such that the corresponding solution of (\ref{eq:linearisedsystem}) with 
$\tilde{\Psi}_0 = 0$ satisfies 
$\tilde{\Psi}_{|t=T} = \tilde{\Psi}_f$. Thus, if we are given $\Psi_0 \in \mathcal{X}_0$ and $\Psi_f \in \mathcal{X}_T$, it suffices to choose 
$\tilde{\Psi}^{\sharp} = -e^{-iT \Delta}\Psi_0 + \Psi_f$,
 which provides a control $w \in \dot{H}_0^1(0,T;\R)$, such that the solution $\tilde{\Psi}$ of (\ref{eq:linearisedsystem}) with 
 $\tilde{\Psi}_{|t=0} = 0$ and control $w$ satisfies $\tilde{\Psi}_{|t=T} = \tilde{\Psi}^{\sharp}$. 
Then, $\Psi(t) = e^{-it \Delta} \Psi_0 + \tilde{\Psi}(t)$ satisfies system (\ref{eq:linearisedsystem}) with control $w$, initial datum $\Psi_{|t=0} = \Psi_0$ and verifies $\Psi_{t=T} = \Psi_f$. 
 }. Thus, the solution of system (\ref{eq:linearisedsystem}) admits the following expansion, for any $t\in [0,T]$,
\begin{align}
\Psi(t) & = -i \sqrt{1 - \theta_2 - \theta_3} \sum_{k=1}^{\infty} \int_0^t \dot{v}(s)e^{i(\lambda_k - \lambda_1)s} \ud s \, a_k \varphi_k e^{-i\lambda_k t} \label{eq:heuristique}\\
		& \quad -i \sqrt{\theta_2} \sum_{k=1}^{\infty} \int_0^t \dot{v}(s)e^{i(\lambda_k - \lambda_2)s} \ud s \, b_k \varphi_k e^{-i\lambda_k t} \nonumber \\
		& \quad -i \sqrt{\theta_3} \sum_{k=1}^{\infty} \int_0^t \dot{v}(s)e^{i(\lambda_k - \lambda_3)s} \ud s \, c_k \varphi_k e^{-i\lambda_k t}, \nonumber
\end{align} where $(\lambda_k)_{k\in \N^*}$ and $(\varphi_k)_{k\in \N^*}$ are given by (\ref{eq:radialeigenvectors}) and
\begin{equation}
a_k :=\langle r^2 \varphi_1, \varphi_k \rangle, \, b_k :=\langle r^2 \varphi_2, \varphi_k \rangle, \, c_k :=\langle r^2 \varphi_3, \varphi_k \rangle, \quad \forall k \in \N^*.
\label{eq:abc}
\end{equation}
Given a state $\Psi_f \in \mathcal{X}_T$, for $\mathcal{X}_T$ given by (\ref{X0T}), we look for a control $v \in \dot{H}^1_0(0,T;\R)$ such that 
\begin{equation}
\Psi_{|t=T} = \Psi_f.
\label{eq:desiredfinaleconditionlinearised}
\end{equation} We shall traduce this condition into a trigonometric moment problem as follows.\par

Firstly, since the control $v$ belongs to $\dot{H}^1_0(0,T;\R)$, we must impose
\begin{equation}
 \int_0^T \dot{v}(s) \ud s = 0, \quad \int_0^T s\dot{v}(s) \ud s = 0. \label{eq:heuristiquecontrol}
\end{equation} Next, in order to satisfy equation (\ref{eq:desiredfinaleconditionlinearised}) we shall decompose $\Psi_f$ in Fourier expansion, which yields
\begin{equation}
\Psi_f = \sum_{k=0}^{\infty} \langle \Psi_f, \varphi_k \rangle \varphi_k,
\label{eq:finalstateFourier}
\end{equation} and then rephrase (\ref{eq:desiredfinaleconditionlinearised}) in terms of each Fourier mode. We can do this by separating low and high frequences. \par

\vspace{0.5em}
\textit{High frequencies.} Let $k\geq 4$. We observe that (\ref{eq:desiredfinaleconditionlinearised}) implies, 
according to (\ref{eq:heuristique}) and (\ref{eq:finalstateFourier}), that 
\begin{align*}
ie^{i \lambda_k T} \langle \Psi_f , \varphi_k \rangle = & \sqrt{1 - \theta_2 - \theta_3} a_k \int_0^T \dot{v}(s) e^{i(\lambda_k - \lambda_1)s} \ud s \\
                                                        & \quad \quad + \sqrt{\theta_2} b_k \int_0^T \dot{v}(s) e^{i(\lambda_k - \lambda_2)s} \ud s \\
                                                        & \quad \quad \quad + \sqrt{\theta_3} c_k \int_0^T \dot{v}(s) e^{i(\lambda_k - \lambda_3)s} \ud s.
\end{align*}Since the frequencies $(\lambda_k- \lambda_j)_{k\geq 4}$ for $j=1,2,3$ are distinct (see Proposition \ref{proposition:nonresonance} below), we can prescribe the following
moment values, for any $k\geq 4$,
\begin{align}
\int_0^T \dot{v}(s) e^{i(\lambda_k - \lambda_1)s} \ud s & = \frac{i \sqrt{1 - \theta_2 - \theta_3}}{a_k} \langle \Psi_f, \varphi_k \rangle e^{i \lambda_k T}, \label{eq:momenthighfrequences1} \\ 
\int_0^T \dot{v}(s) e^{i(\lambda_k - \lambda_2)s} \ud s & = \frac{i \sqrt{\theta_2}}{b_k} \langle \Psi_f, \varphi_k \rangle e^{i\lambda_k T}, \label{eq:momenthighfrequences2} \\
\int_0^T \dot{v}(s) e^{i(\lambda_k - \lambda_3)s} \ud s & = \frac{i \sqrt{\theta_2}}{b_k} \langle \Psi_f, \varphi_k \rangle e^{i\lambda_k T}. \label{eq:momenthighfrequences3}
\end{align} \par

\vspace{0.5em}
\textit{Low frequencies.} We observe that $0 \in \left\{ \lambda_k - \lambda_j, \, k,j = 1,2,3  \right\}$. In that case, since the restriction on the control (\ref{eq:heuristiquecontrol})
must be taken into account, we need to separate the low frequences in such a way we could recover the Fourier modes $( \langle \Psi_f, \varphi_k  \rangle )_{k=1,2,3}$
from $\left( e^{i(\lambda_k - \lambda_j)t} \right)_{k,j=1,2,3}$. This can be done by imposing the following conditions
\begin{align}
\int_0^T \dot{v}(s) e^{i(\lambda_2 -\lambda_1)s} \ud s &= \frac{   i \langle \Psi_f, \varphi_2 \rangle e^{i\lambda_2T}  - \sqrt{\theta_3} c_2 \ovl{C}    }{      a_2 \sqrt{1-\theta_2 - \theta_3}}, \label{eq:momentlow1} \\
\int_0^T \dot{v}(s) e^{i(\lambda_3 -\lambda_1)s} \ud s &= \frac{   i \langle \Psi_f, \varphi_3\rangle e^{i\lambda_3T}  - \sqrt{\theta_2} b_3 C           }{    a_3 \sqrt{1-\theta_2 - \theta_3}}, \label{eq:momentlow2} \\
\int_0^T \dot{v}(s) e^{i(\lambda_3 -\lambda_2)s} \ud s &= C, \label{eq:momentlow3} 
\end{align}where $C\in \comp$ satisfies 
\begin{align}
 & 2ib_3 \sqrt{\theta_2 \theta_3} \Re C = \sqrt{1 -\theta_2 - \theta_3} \langle \Psi_f, \varphi_1 \rangle e^{i\lambda_1 T} \ \label{eq:choixdeC} \\
 & \quad \quad \quad \quad\quad\quad \quad \quad \quad  + \sqrt{\theta_2} e^{-i\lambda_2 T} \ovl{\langle \Psi_f, \varphi_2 \rangle} + \sqrt{\theta_3} e^{-i\lambda_3 T} \ovl{\langle \Psi_f, \varphi_3 \rangle}. \nonumber
\end{align} Note that the choice of $C\in \comp$ is possible, since $\Psi_f \in T_{\psi^{\sharp}_{\tau}}\Sph$. \par 

\vspace{0.5em}
\textit{Conclusion.} Putting together (\ref{eq:heuristiquecontrol}), (\ref{eq:momenthighfrequences1})--(\ref{eq:momenthighfrequences3}) 
and (\ref{eq:momentlow1})--(\ref{eq:momentlow3}), we obtain the following moment problem

\begin{equation}
\left\{ \begin{array}{ll}
 & \int_0^T \dot{v}(s) \ud s = 0, \\
 & \int_0^T s \dot{v}(s) \ud s = 0, \\ 
 & \int_0^T \dot{v}(s) e^{i(\lambda_2 -\lambda_1)s} \ud s = \frac{1}{a_2 \sqrt{1-\theta_2 - \theta_3}} \left( i \langle \Psi_f, \varphi_2 \rangle e^{i\lambda_2T}  - \sqrt{\theta_3} c_2 \ovl{C} \right), \\
 & \int_0^T \dot{v}(s) e^{i(\lambda_3 -\lambda_1)s} \ud s = \frac{1}{a_3 \sqrt{1-\theta_2 - \theta_3}} \left( i \langle \Psi_f, \varphi_3\rangle e^{i\lambda_3T}  - \sqrt{\theta_2} b_3 C \right), \\
 & \int_0^T \dot{v}(s) e^{i(\lambda_3 -\lambda_2)s} \ud s = C, \\
 & \int_0^T \dot{v}(s) e^{i(\lambda_k -\lambda_1)s} \ud s =\frac{i\sqrt{1-\theta_2 -\theta_3}}{a_k}\langle \Psi_f, \varphi_k \rangle e^{i\lambda_k T}, \quad \forall k \geq 4, \\
 & \int_0^T \dot{v}(s) e^{i(\lambda_k -\lambda_2)s} \ud s =\frac{i\sqrt{\theta_2}}{b_k}\langle \Psi_f, \varphi_k\rangle e^{i\lambda_k T}, \quad \forall k \geq 4, \\
 & \int_0^T \dot{v}(s) e^{i(\lambda_k -\lambda_3)s} \ud s =\frac{i\sqrt{\theta_3}}{c_k}\langle \Psi_f, \varphi_k \rangle e^{i\lambda_k T}, \quad \forall k \geq 4. 
 \end{array} \right.
 \label{eq:problemedemomentsaresoudre}
 \end{equation} Indeed, if (\ref{eq:problemedemomentsaresoudre}) is satisfied, then (\ref{eq:desiredfinaleconditionlinearised}) holds.

\begin{remark}
At this point, we can justify further the choice of the family of 
states $\varphi^{\sharp}$ given by (\ref{eq:phidiege}). Indeed, choosing, for instance, 
$(\theta_2,\theta_3)=(0,0)\not \in \mathcal{D}$, we get $\varphi^{\sharp} = \varphi_1$. 
However, in this case, the corresponding linearised system around $(0,\varphi_1)$ is not controllable with controls 
in $\dot{H}_0^1(0,T;\R)$, because of the constraint $\int_0^T \dot{v}(s) \ud s =0$.
\label{remark:choixdesetatsnaturels}
\end{remark}

\subsection{Towards the resolution of the moment problem}
\label{sec:towardstheresolutionofthemomentproblem}

The goal of this section is to develop the necessary mathematical tools leading to the proof of Proposition
\ref{prop:linearisedproblemiscontrollable}. In order to do this, we shall 
rewrite the moment problem given by (\ref{eq:problemedemomentsaresoudre}) in an
abstract form that could be handled by the classical results on moment problems consisting in the 
use of Riesz basis (see Appendix \ref{sec: appendixMoment} for details and notation).

\subsubsection{Reinterpretation of the moment problem}
We observe that 
(\ref{eq:problemedemomentsaresoudre}) can be rewritten in the form 
\begin{equation*}
\langle \dot{v}, e^{- i \omega_k s} \rangle_{L^2(0,T; \comp)} = \int_0^T  \dot{v}(s) e^{i \omega_k s} \ud s = d_k,   
\end{equation*} for the family of frequencies
\begin{equation}
\left\{ \omega_k; \, k \in \N \right\} = \left\{  0 \right\} \cup \left\{ j_{0,n}^2 - j_{0,p}^2; \, p=1,2,3, \, n\geq p+1   \right\},
\label{eq:frequences}
\end{equation} rearranged in increasing order and 
\begin{equation}
d_k := \left\{ \begin{array}{ll}
0,   & \textrm{if } k=0, \\
\frac{1}{a_2 \sqrt{1-\theta_2 - \theta_3}} \left( i \langle \Psi_f, \varphi_2 \rangle e^{i\lambda_2T} - \sqrt{\theta_3} c_2 \ovl{C} \right), & \textrm{if }k=1, \\
\frac{1}{a_3 \sqrt{1-\theta_2 - \theta_3}} \left( i \langle \Psi_f, \varphi_3 \rangle e^{i\lambda_3T}  - \sqrt{\theta_2} b_3 C \right), & \textrm{if }k=2, \\
C, & \textrm{if }k=3,\\
\frac{i\sqrt{\theta_3}}{c_k}\langle \Psi_f, \varphi_k \rangle e^{i\lambda_k T}, & \textrm{if }k\in 4 + 3\N, \\
\frac{i\sqrt{\theta_2}}{b_k}\langle \Psi_f, \varphi_k \rangle e^{i\lambda_k T}, & \textrm{if }k\in 5 + 3\N, \\
\frac{i\sqrt{1-\theta_2 -\theta_3}}{a_k}\langle \Psi_f, \varphi_k \rangle e^{i\lambda_k T}, & \textrm{if } k \in 6 + 3\N,
\end{array} \right.
\label{eq:momentproblesegondmembred}
\end{equation} for $C$ given by (\ref{eq:choixdeC}), $\left\{ a_k,b_k,c_k \right\}_{k \in \N^* }$ given by (\ref{eq:abc}) and 
$\left\{ \langle \Psi_f, \varphi_k \rangle  \right\}_{k \in \N^*}$ given by (\ref{eq:finalstateFourier}). Thus, according to Appendix \ref{sec: appendixMoment}, 
let us consider, for a given $T>0$, the family
\begin{equation}
\mathcal{F} := \left\{ t \mapsto e^{ -i\omega_n t}; n \in \N \right\} \subset L^2(0,T;\comp) 
\end{equation} and let us consider the moment set associated to $\mathcal{F}$, i.e.,
\begin{equation*}
 \mathfrak{M}_{L^2(0,T;\comp)}(\mathcal{F}) = \left\{ \left\{ \langle w, e^{ -i\omega_n t} \rangle_{L^2(0,T;\comp)} \right\}_{n\in \N}, \quad w \in L^2(0,T;\comp) \right\}.
\end{equation*} Then, we shall prove that 
\begin{equation*}
\ell_r^2(\N,\comp) := \left\{ \left\{d_k  \right\}_{k\in \N} \in \ell^2(\N,\comp); \, d_0 \in \R  \right\} \subset  \mathfrak{M}_{L^2(0,T;\comp)}(\mathcal{F}).
\end{equation*} More precisely, we have the following result.

\begin{proposition}
Let $  \left\{\omega_k  \right\}_{k\in \N}$ be the increasing sequence defined by (\ref{eq:frequences}). 
Then, for any $T>0$, there exists a continuous linear map 
\begin{equation*}
\mathcal{M} : \R \times \ell^2_r (\N,\comp)  \rightarrow  L^2(0,T;\R), 
\end{equation*} such that for every $\tilde{d}\in\R$ and $d = \left\{d_n \right\}_{n\in \N} \in \ell^2_r(\N,\comp) $, the function $w:=\mathcal{M}(\tilde{d},d)$ satisfies 
\begin{equation} 
\left\{ \begin{array}{l}
\int_0^T w(t) e^{i\omega_k t} \ud t = d_n, \quad \forall n \in \N,  \\
\int_0^T tw(t) \ud t = \tilde{d}. 
\end{array} \right.
\label{eq:theoreticalmomentproblem}
\end{equation}
\label{prop:moment}
\end{proposition}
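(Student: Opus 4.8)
The plan is to deduce the solvability of the augmented moment problem \eqref{eq:theoreticalmomentproblem} from the abstract moment results recalled in Appendix \ref{sec: appendixMoment}, once we have shown that a suitably enriched and symmetrised version of the exponential family $\{t\mapsto e^{i\omega_k t}\}_{k\in\N}$ is a Riesz sequence (that is, a Riesz basis of its closed linear span) in $L^2(0,T)$ for \emph{every} $T>0$. Note that \eqref{eq:theoreticalmomentproblem} prescribes the moments of $w$ against the exponentials $e^{i\omega_k t}$, which already includes $\int_0^T w\,dt=d_0$ since $\omega_0=0$, together with the extra moment $\int_0^T tw(t)\,dt=\tilde d$; the latter is simply the moment against $t\mapsto t$, i.e.\ the second generalised exponential attached to the frequency $0$ counted with multiplicity two. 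The compatibility constraint $d_0\in\R$ in the definition of $\ell^2_r(\N,\comp)$, together with $\tilde d\in\R$, is exactly what will allow us to produce a \emph{real-valued} $w$.

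The heart of the matter, and the main obstacle, is the structure of the frequency set \eqref{eq:frequences}. Using the asymptotics of the zeros of $J_0$ from Appendix \ref{sec: appendixBessel}, namely $j_{0,n}=(n-\tfrac14)\pi+O(1/n)$, each generator $j_{0,n}^2-j_{0,p}^2$ (with $p\in\{1,2,3\}$, $n\ge p+1$) behaves like $(n-p)(n+p-\tfrac12)\pi^2+O(1)$; hence, after listing the $\omega_k$ in increasing order, one has $\omega_{k+1}-\omega_k\to\infty$. Combined with the non-resonance statement of Proposition \ref{proposition:nonresonance} — which ensures that the generators are pairwise distinct, so that the finitely many "short" gaps are strictly positive — this gives a uniform separation $\gamma:=\inf_{k\ne l}|\omega_k-\omega_l|>0$ and a zero counting density, $\#\{k:\omega_k\le r\}=O(\sqrt r)=o(r)$. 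It is precisely here that the fine properties of Bessel functions established in Section \ref{sec: preuveformuleBessel} are needed; everything downstream is routine Riesz-basis technology.

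To handle the reality of $w$ and the derivative constraint simultaneously, symmetrise: since $w$ is to be real, prescribing $\int_0^T w\,e^{i\omega_k t}\,dt=d_k$ forces $\int_0^T w\,e^{-i\omega_k t}\,dt=\overline{d_k}$, so consider the symmetric multiset $\Lambda=\{0\ (\text{mult. }2)\}\cup\{\pm\omega_k:k\ge1\}$ and the generalised exponential family $\mathcal E(\Lambda)=\{t\mapsto t\}\cup\{t\mapsto e^{i\mu t}:\mu\in\{0\}\cup\{\pm\omega_k:k\ge1\}\}$. It is symmetric about $0$, still has gaps tending to infinity and zero upper density, and its only cluster point is the prescribed double point at $0$ (which sits at a fixed positive distance from $\pm\omega_1$); by the theory of generalised nonharmonic Fourier series (Young's book, and the generalised Ingham theorem), $\mathcal E(\Lambda)$ is a Riesz sequence in $L^2(0,T)$ for every $T>0$. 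Consequently the moment map $w\mapsto(\langle w,\cdot\rangle_{L^2(0,T;\comp)})$ is an isomorphism from the closed linear span of $\mathcal E(\Lambda)$ onto $\ell^2(\Lambda)$; denote its bounded inverse by $\mathcal R$. Given $(\tilde d,d)\in\R\times\ell^2_r(\N,\comp)$, form the conjugate-symmetric target $c\in\ell^2(\Lambda)$ carrying $d_0$ and $\tilde d$ at the double point $0$, $d_k$ at $\omega_k$ and $\overline{d_k}$ at $-\omega_k$; the map $(\tilde d,d)\mapsto c$ is bounded and linear. Set $w:=\mathcal R(c)$. Since $\mathcal E(\Lambda)$ is invariant under complex conjugation, so is its closed span, and the moments of $\overline w$ against $\mathcal E(\Lambda)$ are the conjugates of those of $w$ against the reflected frequencies, which by construction of $c$ (and using $d_0,\tilde d\in\R$) coincide with $c$ again; uniqueness of the element of the span with given moments forces $\overline w=w$, so $w\in L^2(0,T;\R)$. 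The map $\mathcal M:(\tilde d,d)\mapsto\mathcal R(c)$ is then a composition of bounded linear maps, hence the desired continuous linear operator, and by construction $w=\mathcal M(\tilde d,d)$ satisfies \eqref{eq:theoreticalmomentproblem}.
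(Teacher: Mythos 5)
Your overall strategy coincides with the paper's: symmetrise the spectrum to $\{\pm\omega_k\}$, obtain a Riesz-sequence property from the uniform separation of the frequencies together with Beurling's theorem (the upper density vanishing because $\#\{k:\omega_k\le r\}=O(\sqrt r)$), and recover a real-valued $w$ from the conjugate symmetry of the prescribed moments and the uniqueness of the solution in the closed span. Two points, however, deserve attention.

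First, a misstatement that happens to be harmless: it is not true that $\omega_{k+1}-\omega_k\to\infty$ after reordering. The frequencies come in three-element packets $j_{0,n}^2-j_{0,3}^2<j_{0,n}^2-j_{0,2}^2<j_{0,n}^2-j_{0,1}^2$, and the gaps inside a packet are the fixed constants $j_{0,3}^2-j_{0,2}^2$ and $j_{0,2}^2-j_{0,1}^2$; only the inter-packet gaps diverge. Your argument survives because what you actually invoke is the uniform separation and the zero upper density, both of which are correct and are exactly what the paper establishes in Proposition \ref{proposition:constructionofaRieszbasis}.

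Second, and more substantively, the one place where you genuinely deviate from the paper is the treatment of the extra moment $\int_0^T t\,w(t)\,\ud t=\tilde d$. You absorb $t\mapsto t$ into the exponential family as a generalised exponential attached to a double point at $0$ and assert that the enlarged family is still a Riesz sequence in $L^2(0,T)$ for every $T>0$. That statement is true, but it requires a divided-difference (generalised) Ingham--Beurling theorem for exponentials with multiple points, which is strictly stronger than Theorems \ref{thm:Haraux} and \ref{thm:Beurling} and is nowhere proved in your argument; an appeal to ``the theory of generalised nonharmonic Fourier series'' does not close this step. The paper takes a cheaper route: it proves only that $\{t\mapsto t\}\cup\mathcal{F}_T$ is a \emph{minimal} family (Step 2 of Proposition \ref{proposition:constructionofaRieszbasis}, via Stone--Weierstrass and Theorem \ref{thm:Haraux}), solves the purely exponential, conjugate-symmetric moment problem first to obtain a real-valued $u$ with the $\ell^2$ bound, and then adjusts the $t$-moment by adding the multiple $\bigl(\tilde d-\int_0^T t\,u(t)\,\ud t\bigr)\,\tilde{\xi}$ of the biorthogonal element $\tilde{\xi}$ associated with $t\mapsto t$, which leaves all exponential moments untouched. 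To make your version complete you must either prove the generalised Ingham inequality for the double point at $0$ or fall back on this minimality-plus-biorthogonal-correction argument; as written, this step is the gap in the proposal.
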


For the proof of this result, we combine arguments coming from \cite[Appendix B, Corollary 2]{KBCL10} and \cite[Appendix, Proposition 6.1]{NersesyanMorancey}. 
Firstly, we show in Section \ref{sec:anonresonanceproperty} that the frequencies $\left\{ \omega_k  \right\}_{k\in \N^*}$ are non resonant. Secondly,
we construct in Section \ref{sec: constructionofaRieszbasisSchrodinger} a suitable Riesz basis and then we prove Proposition \ref{prop:moment}.

\subsubsection{A non-resonance property}
\label{sec:anonresonanceproperty}
\begin{proposition}
Let $\lambda_n : = j_{0,n}^2$ for any $n \in \N^*$. Then, for any $n,m \in \N^*$ and $p,q \in \left\{ 1,2,3  \right\}$, we have
\begin{equation}
\lambda_n - \lambda_ p \not = \lambda_m - \lambda_ q, \quad \forall n \not = m, \, p \not = q.
\label{eq:nonresonance}
\end{equation}
\label{proposition:nonresonance}
\end{proposition}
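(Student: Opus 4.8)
The plan is to base everything on one structural fact — that the gap sequence $g_k := \lambda_{k+1}-\lambda_k = j_{0,k+1}^2-j_{0,k}^2$ is strictly increasing — together with two explicit numerical inequalities among the first five eigenvalues. I would first record the gap monotonicity. Recall (see Appendix~\ref{sec: appendixBessel}) that $u(x):=\sqrt{x}\,J_0(x)$ solves $u''+\bigl(1+\tfrac{1}{4x^2}\bigr)u=0$ on $(0,\infty)$ and has exactly the positive zeros $j_{0,k}$; since $x\mapsto 1+\tfrac{1}{4x^2}$ is strictly decreasing, a classical Sturm-type comparison argument shows that the successive differences $j_{0,k+1}-j_{0,k}$ are strictly increasing in $k$ (with limit $\pi$). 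As $j_{0,k+1}+j_{0,k}$ is obviously strictly increasing as well, $g_k=(j_{0,k+1}-j_{0,k})(j_{0,k+1}+j_{0,k})$ is strictly increasing; equivalently, $(\lambda_k)_k$ is strictly increasing and strictly convex.

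Next I would argue by contradiction. Suppose $\lambda_n-\lambda_p=\lambda_m-\lambda_q$ with $p\neq q$ in $\{1,2,3\}$ and $n\neq m$ (and, as in the use made of this statement, with $n>p$ and $m>q$; the case $p=q$ is trivial, since then $\lambda_n=\lambda_m$ forces $n=m$). Rewriting the identity as $\lambda_n-\lambda_m=\lambda_p-\lambda_q$ and exchanging the two index pairs if needed, we may assume $p>q$; then the right-hand side is positive and equals one of $g_1=\lambda_2-\lambda_1$, $g_2=\lambda_3-\lambda_2$, $g_1+g_2=\lambda_3-\lambda_1$. Since $\lambda_n-\lambda_m>0$ we get $n>m$, while $m>q$ forces $m\geq q+1\geq 2$. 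If $n\geq m+2$ then $\lambda_n-\lambda_m\geq g_m+g_{m+1}\geq g_2+g_3>g_1+g_2$ by convexity, contradicting that the right-hand side is $\leq g_1+g_2$. Hence $n=m+1$, i.e.\ $\lambda_p-\lambda_q=g_m$ with $m\geq\max(q+1,p)=p$.

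It then remains to exclude $g_m=\lambda_p-\lambda_q$ for $m\geq p$. If $p-q=1$, i.e.\ $(p,q)\in\{(2,1),(3,2)\}$, this reads $g_m=g_q$ with $m\geq q+1>q$, contradicting strict monotonicity. If $p-q=2$, i.e.\ $(p,q)=(3,1)$, this reads $g_m=g_1+g_2$ with $m\geq 3$, and I would finish by checking the two quantitative inequalities
\[
g_3<g_1+g_2<g_4, \qquad\text{i.e.}\qquad j_{0,4}^2-j_{0,3}^2<j_{0,3}^2-j_{0,1}^2<j_{0,5}^2-j_{0,4}^2,
\]
which, combined with the strict monotonicity $g_3<g_4<g_5<\cdots$, show that $g_1+g_2$ is not a term of $(g_m)_{m\geq 3}$.

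The main obstacle is exactly this last step: it is the only point where something beyond convexity of $(\lambda_k)_k$ enters. The inequality $j_{0,5}^2-j_{0,4}^2>j_{0,3}^2-j_{0,1}^2$, and especially $j_{0,4}^2-j_{0,3}^2<j_{0,3}^2-j_{0,1}^2$ (equivalently $\lambda_4+\lambda_1<2\lambda_3$), are genuinely quantitative — they are not forced by monotonicity of the gaps alone — so they must be certified from rigorous enclosures of $j_{0,1},\dots,j_{0,5}$ (Appendix~\ref{sec: appendixBessel}); numerically $j_{0,3}^2-j_{0,1}^2\approx 69.1$ sits strictly between $j_{0,4}^2-j_{0,3}^2\approx 64.1$ and $j_{0,5}^2-j_{0,4}^2\approx 83.9$, so this amounts to a finite certified computation rather than a structural argument.
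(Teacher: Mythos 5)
Your argument is correct, and it rests on the same two ingredients as the paper's proof of Proposition \ref{proposition:nonresonance} --- the strict monotonicity of the gaps $j_{0,k+1}-j_{0,k}$ from (\ref{eq:increasingsequenceofzeros}), plus a finite numerical certificate on the first few zeros --- but it is organized differently and is, in one respect, more careful. The paper manipulates the factorization $\lambda_n-\lambda_m=(j_{0,n}-j_{0,m})(j_{0,n}+j_{0,m})$, restricts at the outset to $n,m\geq 4$ (declaring the remaining cases ``obvious''), and splits on $p-q\in\{1,2\}$ and then on $n-m=1$ versus $n-m\geq 2$; you instead package gap monotonicity as strict convexity of $(\lambda_k)_k$ and use it to force $n=m+1$ in one stroke, reducing the whole statement to the question of whether $\lambda_p-\lambda_q$ can equal a single gap $g_m$ with $m\geq p$. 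Your route buys two things. First, it isolates the genuinely delicate case $\lambda_4-\lambda_3$ versus $\lambda_3-\lambda_1$ (numerically $\approx 64.1$ versus $\approx 69.1$), which is precisely the case $m=3$ that the paper's restriction to $n,m\geq 4$ sweeps into ``obvious otherwise''; it is not obvious, and your inequality $g_3<g_1+g_2$ is exactly the check that settles it. Second, your numerical certificate is stated correctly, whereas the paper's corresponding check ``$j_{0,4}+j_{0,3}>2(j_{0,3}+j_{0,1})$'' is in fact false ($\approx 20.4<22.1$) and should read $j_{0,5}+j_{0,4}>2(j_{0,3}+j_{0,1})$, which is what the standing assumption $m\geq 4$ actually delivers. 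Two caveats on your side. You prove the statement only for $n>p$ and $m>q$; this is the version used to define the frequencies (\ref{eq:frequences}), but it is formally weaker than the proposition as written, and the leftover cases (which reduce to finitely many comparisons of the same kind, e.g.\ whether $\lambda_n+\lambda_q=2\lambda_3$ for some $n\geq 4$) should at least be mentioned --- though the paper's own proof is no more complete here. And, as you say yourself, $g_4>g_1+g_2$ cannot be extracted from monotonicity alone (note $g_4<2g_2$, so no soft comparison closes it); it, like $g_3<g_1+g_2$, must be certified from rigorous enclosures of $j_{0,1},\dots,j_{0,5}$, which is the same kind of quantitative input the paper also relies on.
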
 

\begin{proof}
Let us assume that $n,m \geq 4$, property (\ref{eq:nonresonance}) being obvious otherwise. \par 
Working by contradiction, let us suppose that there exist $m,n\geq 4$ and $p,q \leq 3$ such that
\begin{equation}
\lambda_n - \lambda_p = \lambda_m - \lambda_q.
\label{eq:resonance}
\end{equation} Moreover, we may assume, without loss of generality, that 
\begin{equation}
n>m>p>q.
\label{eq:mnpq}
\end{equation} We shall distinguish two cases. 

\vspace{0.5em}
\textit{Case 1.} Let us suppose that $p= q +1$. \par 
Then, thanks to (\ref{eq:increasingsequenceofzeros}), we have
\begin{align}
\lambda_n - \lambda_m & = \left( j_{0,n} - j_{0,m} \right)\left( j_{0,n} + j_{0,m} \right) \nonumber \\
					 &	= \sum_{k=m}^{n-1} \left( j_{0,k+1} - j_{0,k} \right)\left( j_{0,n} + j_{0,m} \right) \nonumber \\		
					 & > (n-m) \left( j_{0,p} - j_{0,q} \right) \left( j_{0,n} + j_{0,m} \right). 		\nonumber														
\end{align} Thus, combining this with (\ref{eq:resonance}), we get
\begin{equation*}
\lambda_p - \lambda_q > (n-m) \left( j_{0,p} - j_{0,q} \right) \left( j_{0,n} + j_{0,m} \right).
\end{equation*} This implies
\begin{equation*}
j_{0,p} + j_{0,q} > j_{0,n} + j_{0.m},
\end{equation*} which is incompatible with (\ref{eq:mnpq}), which shows (\ref{eq:nonresonance}) in this case.

\vspace{0.5em}
\textit{Case 2.} Let us suppose that $p=q+2$. \par 
Firstly, let us assume that $n=m+1$. Then, by claim (\ref{eq:resonance}) and using (\ref{eq:increasingsequenceofzeros}) twice, this yields
\begin{align*}
j_{0,m+1} + j_{0,m} & < \frac{(j_{0,p} - j_{0,q})(j_{0,p} + j_{0,q})}{j_{0,m+1}-j_{0,m}} \nonumber \\
& < \frac{(j_{0,p} - j_{0,q})(j_{0,p} + j_{0,q})}{j_{0,p}-j_{0,q+1}} \nonumber \\
& < \left(1 + \frac{j_{0,q+1}-j_{0,q}}{j_{0,p} - j_{0,q+1}} \right) \left(j_{0,p} + j_{0,q} \right) < 2\left(j_{0,p} + j_{0,q} \right).
\end{align*} But this is impossible, since $j_{0,4} + j_{0,3} >2(j_{0,3} + j_{0,1})$, as can be seen from the exact values of these zeros. \par 
Secondly, let us suppose that $n-m\geq 2$, i.e., $\floor{ \frac{n-m}{2} } \geq 1$, where $\floor{ \cdot }$ stands for the floor function. Thus,
\begin{align*}
j_{0,n} - j_{0,m} & > \sum_{i=0}^{\floor{\frac{n-m}{2}}-1} \left( j_{0,m + 2(i+1)} - j_{0,m + 2i} \right) \\
& \geq  \floor{\frac{n-m}{2}} \left( j_{0,p} - j_{0,q} \right) \geq \left( j_{0,p} - j_{0,q} \right).
\end{align*} Thus, (\ref{eq:resonance}) yields
\begin{equation*}
j_{0,n} + j_{0,m} < j_{0,p} + j_{0,q}, 
\end{equation*} which is in contradiction with (\ref{eq:mnpq}).

\end{proof}

\subsubsection{Construction of a Riesz basis}
\label{sec: constructionofaRieszbasisSchrodinger}

\begin{proposition}
Let us set, according to (\ref{eq:frequences}), $\omega_{-n}:= \ovl{\omega_n}$, for any $n\in \N$ and $\omega_0 = 0$. 
Let us define, for a given $T>0$, the families 
\begin{equation*}
\mathcal{F}_T:= \left\{ t \mapsto e^{i\omega_n t}; n \in \Z    \right\} \subset L^2(0,T; \comp), 
\end{equation*} and
\begin{equation*}
\mathcal{F}_T^*:= \left\{ t \mapsto e^{i\omega_n t}; n \in \Z^*    \right\} \subset L^2(0,T; \comp). 
\end{equation*} Then, we have that (see Definition \ref{definition:minimalRiesz} for details)
\begin{enumerate}
 \item $\mathcal{F}_T^*$ is a Riesz basis of $H_T:= Adh_{L^2(0,T;\comp)} \left(span \mathcal{F}_T  \right)$.
 \item $\mathcal{F}_T$ is a minimal family in $L^2(0,T;\comp)$.
\end{enumerate}
\label{proposition:constructionofaRieszbasis} 
\end{proposition}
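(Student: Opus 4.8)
The strategy is to verify the two classical sufficient conditions for a family of exponentials $\{e^{i\omega_n t}\}$ to form a Riesz basis of its closed span in $L^2(0,T;\comp)$: a uniform gap (separation) condition on the frequencies, together with a suitable density/completeness count, so that one can invoke Kadec-type or Ingham-type theorems from the theory of nonharmonic Fourier series (the abstract statements collected in Appendix \ref{sec: appendixMoment}). The frequencies $\{\omega_n\}_{n\in\Z}$ are the symmetric set built from $0$ and the differences $j_{0,n}^2 - j_{0,p}^2$ for $p\in\{1,2,3\}$, $n\geq p+1$. First I would establish that this family is \emph{uniformly separated}, i.e.\ $\inf_{m\neq n}|\omega_m - \omega_n| =: \gamma > 0$. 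Distinctness of all the $\omega_n$ is exactly Proposition \ref{proposition:nonresonance}; to upgrade distinctness to a uniform gap one uses the asymptotics of the Bessel zeros, namely $j_{0,k} = \pi(k - 1/4) + O(1/k)$, so that $\lambda_k = j_{0,k}^2 = \pi^2 k^2 + O(k)$ grows quadratically with gaps $\lambda_{k+1}-\lambda_k \sim 2\pi^2 k \to \infty$. From this one checks that within each of the three arithmetic-type subfamilies (indexed by $p=1,2,3$) consecutive frequencies are eventually separated by an amount tending to infinity, and that two frequencies coming from different values of $p$ cannot be arbitrarily close — the finitely many small-index exceptions are handled by Proposition \ref{proposition:nonresonance} (a finite set of distinct numbers has a positive minimal gap). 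This yields $\gamma>0$.

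Next I would record the counting/density structure. Because $\lambda_k \sim \pi^2 k^2$, the counting function of $\{\omega_n : |\omega_n|\le R\}$ grows like $\sqrt R$ (each subfamily $\{j_{0,n}^2 - j_{0,p}^2\}$ has counting function $\sim \sqrt R/\pi$), so the upper density of the frequency set is $0$. Combined with the uniform gap $\gamma>0$, the standard theory (see the references cited for Proposition \ref{prop:moment}, e.g.\ the results of Beauchard--Laurent and Morancey--Nersesyan in Appendix \ref{sec: appendixMoment}, ultimately going back to Ingham and to Kadec/Pavlov–Golovin theory) gives that $\mathcal{F}_T^*$ is a Riesz basis of the closure $H_T$ of its span in $L^2(0,T;\comp)$, for \emph{every} $T>0$ — the quadratic growth of the $\lambda_k$ makes the gap condition so strong that no lower bound on $T$ is needed. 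This proves (1).

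For (2), the minimality of $\mathcal{F}_T$ in $L^2(0,T;\comp)$: one must show that adjoining the single extra element $t\mapsto e^{i\omega_0 t}=1$ to the Riesz basis $\mathcal{F}_T^*$ keeps the family minimal, i.e.\ that $1 \notin \overline{\operatorname{span}}\, \mathcal{F}_T^*$ and, symmetrically, that each $e^{i\omega_n t}$ ($n\neq 0$) stays outside the closed span of the others. Since $\mathcal{F}_T^*$ is already a Riesz basis of $H_T$, each of its own elements is automatically outside the closed span of the rest; the only new point is that $1\notin H_T$. This is where a little room in $T$ or a biorthogonality argument enters: because the density of $\{\omega_n\}_{n\in\Z}$ is $0$ while $\dim L^2(0,T;\comp)=\infty$, the span $H_T$ is a proper closed subspace, and one produces $g\in L^2(0,T;\comp)$ orthogonal to all $e^{i\omega_n t}$, $n\neq 0$, but with $\int_0^T g \neq 0$ — equivalently, a generalized eigenfunction / Paley–Wiener-type construction separating the frequency $0$ from the rest of the (uniformly separated, zero-density) set. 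This adjunction-of-one-element argument is precisely the mechanism in \cite[Appendix B]{KBCL10}, and it transfers verbatim once (1) and the gap estimate are in hand.

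The main obstacle is the uniform gap estimate: proving $\inf_{m\neq n}|\omega_m-\omega_n|>0$ for the \emph{three interleaved} difference families requires the precise second-order asymptotics of $j_{0,k}$ (available from Appendix \ref{sec: appendixBessel}) to control the eventual behaviour, plus the non-resonance Proposition \ref{proposition:nonresonance} to rule out accidental near-coincidences among the finitely many low-index frequencies; everything after that (the Riesz-basis conclusion and the minimality of the augmented family) is a direct appeal to the abstract machinery of Appendix \ref{sec: appendixMoment}.
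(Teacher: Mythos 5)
Your argument for part (1) is essentially the paper's: a uniform gap for the three interleaved difference families (constant gaps $j_{0,2}^2-j_{0,1}^2$ and $j_{0,3}^2-j_{0,2}^2$ inside each three-element packet, diverging gaps between consecutive packets, the finitely many low-index near-coincidences excluded by Proposition \ref{proposition:nonresonance}), a counting function of order $\sqrt{r}$ giving $D^+(\omega)=0$, and then Theorem \ref{thm:Haraux} combined with Theorem \ref{thm:Beurling} to obtain the Riesz basis property for every $T>0$ with no lower bound on the time. For part (2) you take a genuinely different and in fact more direct route: you reduce minimality of $\mathcal{F}_T=\{1\}\cup\mathcal{F}_T^*$ to the single claim $1\notin H_T$ (a correct reduction, since $1\notin H_T$ forces $\comp\cdot 1+V$ to be closed for every closed subspace $V\subset H_T$, so adjoining the constant cannot capture any $e^{i\omega_n t}$, although you leave this small step implicit), and you then want a biorthogonal functional $g$ orthogonal to $\mathcal{F}_T^*$ with $\int_0^T g\neq 0$. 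The paper instead argues by contradiction: failure of minimality would place $t\mapsto t$ (hence, by successive integrations, every power $t^j$) in the relevant closed span, Stone--Weierstrass would then make $\mathrm{span}\,\mathcal{F}_T$ dense in $L^2(0,T;\comp)$, and this contradicts the fact (again via Haraux) that adjoining a fresh exponential $e^{i\omega t}$ with $\omega\notin\{\omega_n\}$ leaves the family minimal. Your version avoids this density detour, but as written it asserts the existence of $g$ rather than producing it; the missing line is simply that the augmented frequency set $\{0\}\cup\{\omega_n\}_{n\in\Z^*}$ satisfies the very same gap and zero-density hypotheses, so $\mathcal{F}_T$ is itself a Riesz basis of its closed span (hence minimal), and $g$ is the element of its biorthogonal family dual to the constant. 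Both proofs ultimately rest on the same two inputs --- the gap estimate and $D^+(\omega)=0$ --- so the divergence in part (2) is one of packaging rather than substance.
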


\begin{proof} \par 
\textit{Step 1.} We prove point (1). \par 
We observe that (\ref{eq:decreasingzeros}) and (\ref{eq:increasingsequenceofzeros}) imply that, for any $k \in \Z^*$,
\begin{align*}
\omega_{k+1} - \omega_{k+1} & = j_{0, N(k+1)}^2 - j_{0, N(k)}^2 \\
			    & = \left( j_{0, N(k+1)} - j_{0, N(k)} \right)\left( j_{0, N(k+1)} + j_{0, N(k)} \right) \\
			    & \geq (j_{0,2} - j_{0,1} ) (j_{0,2} + j_{0,1} ) = j_{0,2}^2 - j_{0,1}^2 > 0,
\end{align*} for some bijection $N : \N^* \rightarrow \N^* $. Thus, (\ref{eq:gapconditionHaraux}) and (\ref{eq:gapconditionHaraux2}) 
are satisfied and Theorem \ref{thm:Haraux} can be applied for any $T \geq \frac{2 \pi}{j_{0,2}^2 - j_{0,1}^2} $. Thus, combining this with
Theorem \ref{thm:abstractRieszbasis}, we deduce that $\mathcal{F}_T^* $ is a Riesz basis of $H_T$ for any $T \geq \frac{2 \pi}{j_{0,2}^2 - j_{0,1}^2} $. \par

Moreover, we notice that, thanks to Beurling's theorem (see Theorem \ref{thm:Beurling}), we can extend the validity of this statement to every $T>0$.  
Let us consider $D^+(\omega)$, the upper density of the sequence $ \left\{ \omega_n \right\}_{n \in \Z}$, according to (\ref{eq:Polyaupperdensity}). 
We shall prove that $D^+(\omega) = 0$. \par 
Indeed, let us observe that (\ref{eq:zerostendtoninfty}) and (\ref{eq:convergingzeros}) imply that
\begin{equation}
\omega_n \rightarrow \infty, \quad \textrm{ as } n \rightarrow \infty.
\label{eq:frequenciestendtoinfinity}
\end{equation} Moreover, we observe that, for a sufficiently large $n_0 \in \N$, 
the frequencies $ \left\{ \omega_n  \right\}_{n\geq n_0}$ can be gathered in successive three-element packets of the form
\begin{equation*}
j_{0,n_0+n}^2 - j_{0,3}^2< j_{0,n_0+n}^2 - j_{0,2}^2 < j_{0,n_0+n}^2 - j_{0,1}^2.
\end{equation*} Consequently, the gap between the elements of each packet must be 
\begin{equation*}
\tilde{\gamma} = \min\left\{ j_{0,3}^2 - j_{0,2}^2, j_{0,2}^2 - j_{0,1}^2  \right\} >0.
\end{equation*} In addition, the gap between the elements of successive packets must be
\begin{equation*}
j_{0,n_0 + n +1 }^2 - j_{0,n_0 + n}^2 + j_{0,1}^2 - j_{0,3}^2, 
\end{equation*} which tends to $\infty$ as $n\rightarrow \infty$, thanks to (\ref{eq:zerostendtoninfty}) and (\ref{eq:convergingzeros}). 
In addition, the non-resonance property (\ref{eq:nonresonance}) ensures that $\omega_k \not = \omega_n$, for any $n \not = k $. We then deduce from this that the frequencies $\omega_k$ do not concentrate, i.e., 
\begin{equation*}
\inf_{k\in \N} \left(  \omega_{k+1} - \omega_k \right)  \geq \tilde{\gamma} > 0.
\end{equation*} On the other hand, let $r>0$ be large enough. According to (\ref{eq:frequenciestendtoinfinity}) and the previous discussion, we must have
\begin{align*}
\max_{I \subset \R \textrm{ interval } \, |I|=r } \# \left\{ \omega_k \in I   \right\} 
& \leq 3 \# \left\{ \omega_k \leq r +j_{0,3}^2  \right\} \\
& \leq 3 \# \left\{ j_{0,k}^2 \leq r +j_{0,3}^2  \right\} \\
& \leq 3 \# \left\{ k^2 \leq r +j_{0,3}^2  \right\} \\
& \leq 3 \sqrt{r + j_{0,3^2}}, 
\end{align*} as $k^2 \leq j_{0,k}^2$ for any $k\in \N^*$, according to (\ref{eq:convergingzeros}) 
and (\ref{eq:increasingsequenceofzeros}). Thus,
\begin{equation*}
D^+(\omega)= \lim_{r\rightarrow \infty} 
\frac{\max_{I \subset \R \textrm{ interval } \, |I|=r } \# \left\{ \omega_k \in I   \right\}}{r} 
 = \lim_{r\rightarrow \infty}  \frac{   3 \sqrt{r + j_{0,3}^2}  }{  r  } =0.
\end{equation*} Theorem \ref{thm:Beurling} allows to conclude. \par

\vspace{0.5em}
\textit{Step 2.} We prove point (2). \par 
Working by contradiction, let us assume that $\mathcal{F}_T$ is not minimal in $L^2(0,T:\comp)$, for some $T>0$. 
Then, the previous step implies (see Remark \ref{remark:Riesz basisareminimal}) that 
\begin{equation*}
t \mapsto t \in Adh_{L^2(0,T;\comp)} \left( span \mathcal{F}_T^*  \right).
\end{equation*} Then, by successive integrations, one checks that
\begin{equation*}
t \mapsto t^j \in Adh_{\C^0([0,T])} \left( span \mathcal{F}_T  \right), \quad \forall j \in \N, \, j\geq 2. 
\end{equation*} On the other hand, the Stone-Weierstrass theorem guarantees that the family defined by 
$\left\{ t \mapsto 1, t \mapsto t^j; j\in\N, \, j\geq 2  \right\}$ is dense in $\C^0([0,T])$. Thus, we deduce that 
\begin{equation}
span \mathcal{F}_T \textrm{ is dense in } L^2(0,T;\comp).
\label{eq:densityinL2}
\end{equation} Let us choose some $\omega \in \R \setminus \left\{ \omega_n  \right\}_{n \in \Z}$. 
The previous step, combined with theorem (\ref{thm:Haraux}), entails that 
$\left\{ t \mapsto e^{i\omega t} \right\} \cup \mathcal{F}_T$ is minimal in $L^2(0,T;\comp)$. But then, we must have
\begin{equation*}
t \mapsto e^{i \omega t } \not \in Adh_{L^2(0,T;\comp)} \left(  span \mathcal{F}_T \right),
\end{equation*} which is a contradiction with (\ref{eq:densityinL2}).\par
\end{proof}

Once we have obtained a suitable Riesz basis, we can prove Proposition \ref{prop:moment}. \par 

\begin{proof}[Proof of Proposition \ref{prop:moment}] 
Let us set $d_k:= \ovl{d_{-k}}$, for any $k\in \Z^*$ with $k<0$. Let $\left\{ \tilde{\xi},  \xi_k,; k \in \Z  \right\}$ be the biorthogonal family to $\mathcal{F}_T$ (see point (3) in
Theorem \ref{thm:abstractRieszbasis}). Using Proposition \ref{proposition:constructionofaRieszbasis} and Remark \ref{remark:Rieszbasissovlemomentproblems},
there exists a constant $C>0$ and a unique $u \in H_T$ satisfying 
\begin{equation*}
\int_0^T u(t) e^{i \omega_k t} \ud t = d_k, \quad \forall k \in \Z,
\end{equation*} and such that 
\begin{equation*}
\| u \|_{L^2(0,T)} \leq C \left(  \sum_{k\in \Z^* }|d_k|^2 \right)^{\frac{1}{2}}.
\end{equation*} Moreover, $u$ is real-valued thanks to the uniqueness. Let us set 
\begin{equation*}
w:=\mathcal{M}(\tilde{d},d) = u + \left( \tilde{d} -\int_0^T t u(t) \ud t \right) \tilde{\xi}. 
\end{equation*} Thus, $w$ solves (\ref{eq:theoreticalmomentproblem}) and is also real-valued,
since $u$ and $\tilde{\xi}$ are so. Moreover, proceeding exactly as in \cite[Corollary 2, Appendix B]{KBCL10}, one shows that the map $\mathcal{M}$ is continuous.
\end{proof}

\subsubsection{A key asymptotic result}  
\label{sec: preuveformuleBessel}
The goal of this section is to prove the following formulae, which are key to prove that $ d = \left\{ d_k \right\}_{k \in \N}$, defined by (\ref{eq:momentproblesegondmembred}) is well-defined and belongs 
to $\ell^2(\N;\comp)$.

\begin{lemma}
For every $\nu \in \N$ and $k,l \in \N^*$ such that $k \not = l$, 
\begin{equation}
\int_0^1 r^3 J_{\nu}(j_{\nu,l}r)J_{\nu}(j_{\nu,k}r) \ud r = \frac{4 j_{\nu,k}j_{\nu,l}J_{\nu+1}(j_{\nu,k})J_{\nu+1}(j_{\nu,l})}{\left( j_{\nu,k}^2 - j_{\nu,l}^2   \right)^2}. \label{eq:Besselsecondformula}
\end{equation}
\label{lemma:asymptotics}
\end{lemma}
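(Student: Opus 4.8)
The plan is to peel the weight $r^3$ down to $r$ by exploiting the Bessel equation in Sturm--Liouville form together with a single integration by parts, thereby reducing (\ref{eq:Besselsecondformula}) to the evaluation of an integral of the type $\int_0^1 r^2 J_\nu(ar)J_\nu'(br)\ud r$, which in turn I would obtain by differentiating Lommel's first integral formula with respect to a parameter. Throughout, set $a:=j_{\nu,l}$, $b:=j_{\nu,k}$ (so $a\neq b$) and $f(r):=J_\nu(ar)$, $g(r):=J_\nu(br)$. These solve $(rf')'+(a^2r-\nu^2/r)f=0$ and the analogue for $g$, vanish at $r=1$, and by the small-argument expansion $J_\nu(z)=(z/2)^\nu/\Gamma(\nu+1)+O(z^{\nu+2})$ satisfy $f,g=O(r^\nu)$ and $f',g'=O(r^{\nu-1})$ near $r=0$. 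The classical facts I would invoke are: the orthogonality relation $\int_0^1 rfg\ud r=0$ for $k\neq l$ (see Appendix \ref{sec: appendixBessel}), Lommel's first integral
\begin{equation*}
\int_0^1 r J_\nu(ar)J_\nu(br)\ud r=\frac{aJ_\nu'(a)J_\nu(b)-bJ_\nu(a)J_\nu'(b)}{b^2-a^2},
\end{equation*}
valid for \emph{every} pair $a\neq b$ (the argument below specialises it to zeros only at the very end), and the identity $J_\nu'(j_{\nu,m})=-J_{\nu+1}(j_{\nu,m})$.

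First I would subtract $f\cdot(\text{equation for }g)$ from $g\cdot(\text{equation for }f)$ to get $\frac{\ud}{\ud r}\big[r(gf'-fg')\big]=(b^2-a^2)\,rfg$; multiplying by $r^2$ and integrating by parts on $(0,1)$, the boundary term $\big[r^3(gf'-fg')\big]_0^1$ vanishes (at $r=1$ since $f(1)=g(1)=0$; at $r=0$ by the bounds above, noting that for $\nu=0$ the leading terms of $gf'-fg'$ cancel so that $gf'-fg'=O(r)$), and I obtain
\begin{equation*}
(a^2-b^2)\int_0^1 r^3 fg\ud r=2\int_0^1 r^2(gf'-fg')\ud r=:2K.
\end{equation*}
Integrating $\int_0^1 r^2 gf'\ud r$ by parts to move the derivative onto $r^2 g$, the boundary term again vanishes, and using $\int_0^1 rfg\ud r=0$ I get $\int_0^1 r^2 gf'\ud r=-\int_0^1 r^2 fg'\ud r$, hence $K=-2\int_0^1 r^2 fg'\ud r=-2b\int_0^1 r^2 J_\nu(ar)J_\nu'(br)\ud r$.

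It then remains to compute $\int_0^1 r^2 J_\nu(ar)J_\nu'(br)\ud r$. Differentiating Lommel's first integral with respect to $b$ (legitimate by smoothness of the integrand in $b$) turns the left-hand side into exactly this integral; evaluating the differentiated right-hand side at $a=j_{\nu,l}$, $b=j_{\nu,k}$, where $J_\nu(a)=J_\nu(b)=0$ so that the numerator and its zeroth-order contribution drop out, yields $\int_0^1 r^2 J_\nu(ar)J_\nu'(br)\ud r=\dfrac{aJ_\nu'(a)J_\nu'(b)}{b^2-a^2}$. Plugging back, $(a^2-b^2)\int_0^1 r^3 fg\ud r=-4b\cdot\dfrac{aJ_\nu'(a)J_\nu'(b)}{b^2-a^2}=\dfrac{4abJ_\nu'(a)J_\nu'(b)}{a^2-b^2}$, so $\int_0^1 r^3 fg\ud r=\dfrac{4abJ_\nu'(a)J_\nu'(b)}{(a^2-b^2)^2}$; substituting $a=j_{\nu,l}$, $b=j_{\nu,k}$ and $J_\nu'(j_{\nu,m})=-J_{\nu+1}(j_{\nu,m})$ gives (\ref{eq:Besselsecondformula}).

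The computation is elementary once this route is chosen; the only points needing (mild) care — hence the main obstacle — are verifying that the boundary contributions at $r=0$ genuinely vanish for every $\nu\in\N$ (including the cancellation when $\nu=0$) and justifying the interchange of differentiation and integration in the parameter $b$ before specialising $a,b$ to zeros of $J_\nu$.
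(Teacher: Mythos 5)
Your argument is correct, and it reaches (\ref{eq:Besselsecondformula}) by a route that differs in execution from the paper's, although the two proofs meet at the same intermediate identity. The paper introduces the auxiliary functions $W^2_{\nu,k}(r)=r^2J_\nu(j_{\nu,k}r)$ and $W^1_{\nu-1,k}(r)=rJ_{\nu-1}(j_{\nu,k}r)$ of (\ref{eq:W2}) and (\ref{eq:W1}), writes down the second-order ODEs they satisfy, and splits the resulting operators so that repeated integration by parts first yields (\ref{eq:Besselintegral3}) --- which, after the recurrence $J_\nu'=J_{\nu-1}-\tfrac{\nu}{r}J_\nu$ and the orthogonality relation (\ref{eq:Besselorthogonality}), is precisely your identity $(a^2-b^2)\int_0^1 r^3fg\ud r=-4b\int_0^1 r^2J_\nu(ar)J_\nu'(br)\ud r$ --- and then runs a second, analogous computation for the remaining integral, invoking the cross-product formula (\ref{eq:integralBessel}) only to show that one term vanishes. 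You obtain the first reduction more transparently from the Wronskian identity $\frac{\ud}{\ud r}\left[r(gf'-fg')\right]=(b^2-a^2)rfg$ combined with orthogonality, and you dispose of the remaining integral in one stroke by differentiating Lommel's formula in the parameter $b$ before specialising $a,b$ to zeros. The two delicate points you flag both check out: the boundary terms at $r=0$ vanish for every $\nu\in\N$ (for $\nu=0$ no cancellation is actually needed, since $J_0'(z)=-J_1(z)=O(z)$ makes $gf'$ and $fg'$ individually $O(r)$), and the parameter differentiation is legitimate because Lommel's identity holds for all $b\neq a$ in a neighbourhood of $j_{\nu,k}$ and the integrand is smooth in $(r,b)$. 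What your version buys is brevity and the avoidance of the slightly awkward $J_{\nu-1}=J_{-1}$ manipulations when $\nu=0$; what the paper's version buys is that it stays entirely within the integral and differential identities already collected in Appendix \ref{sec: appendixBessel}, without ever differentiating an identity with respect to a parameter.
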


\begin{proof}
Let us define, for every $k\in \N^*$,
\begin{equation}
W^2_{\nu,k}(r):= r^2 J_{\nu}(j_{\nu,k}r), \quad \forall r\in (0,1).
\label{eq:W2}
\end{equation} From (\ref{eq:Besseldifferentialequation}), we deduce that $W^2_{\nu,k}$ satisfies the following equation
\begin{equation}
\frac{\ud^2}{\ud r^2} W^2_{\nu,k}(r) -\frac{3}{r}\frac{\ud}{\ud r} W^2_{\nu,k}(r) + \left( j_{\nu,k}^2 + \frac{4-\nu^2}{r^2}\right) W^2_{\nu,k}(r)=0, \quad \forall r\in (0,1).
\label{eq:W2equation}
\end{equation} This implies
\begin{eqnarray}
&&\int_0^1 r^3 J_{\nu}(j_{\nu,k}r)J_{\nu}(j_{\nu,l}r) \ud r = \int_0^1 W^2_{\nu,k}(r) J_{\nu}(j_{\nu,l}r) r \ud r \nonumber \\
&&= -\frac{1}{j_{\nu,k}^2} \int_0^1 \left( \frac{\ud^2 }{\ud r^2} -\frac{3}{r} \frac{\ud}{\ud r} + \frac{4-\nu^2}{r^2} \right)W^2_{\nu,k}(r) J_{\nu}(j_{\nu,l}r) r \ud r \nonumber \\
&&= - \frac{1}{j_{\nu,k}^2} \int_0^1 \left( \frac{\ud^2 }{\ud r^2} + \frac{1}{r}\frac{\ud}{\ud r} - \frac{\nu^2}{r^2} \right) W^2_{\nu,k}(r) J_{\nu}(j_{\nu,l}r) r \ud r  \nonumber\\
&& \quad \quad \quad + \frac{4}{j_{\nu,k}^2}\int_0^1 \left( \frac{1}{r} \frac{\ud}{\ud r} - \frac{1}{r^2}\right) W^2_{\nu,k}(r) J_{\nu}(j_{\nu,l}r) r \ud r. \label{eq:W2integrals}
\end{eqnarray} For the last integral, we have, by (\ref{eq:W2}), (\ref{eq:Besselorthogonality}) and (\ref{eq:derivativeofJnu}), 
\begin{eqnarray}
&& \int_0^1 \left( \frac{1}{r} \frac{\ud}{\ud r} - \frac{1}{r^2}\right) W^2_{\nu,k}(r) J_{\nu}(j_{\nu,l}r) r \ud r  \nonumber\\
&& \quad \quad \quad \quad = j_{\nu,k}\int_0^1 r^2 J_{\nu}'(j_{\nu,k}r) J_{\nu}(j_{\nu,l}r) \ud r \nonumber \\
&& \quad \quad \quad \quad = j_{\nu,k} \int_0^1 r^2 J_{\nu-1}(j_{\nu,k}r) J_{\nu}(j_{\nu,l}r) \ud r. \label{eq:W2integarlsecond}
\end{eqnarray} For the other integral in (\ref{eq:W2integrals}), we have, integrating by parts and using (\ref{eq:Besseldifferentialequation}), 
\begin{equation*}
\int_0^1 \left( \frac{\ud^2 }{\ud r^2} + \frac{1}{r}\frac{\ud}{\ud r} - \frac{\nu^2}{r^2} \right) W^2_{\nu,k}(r) J_{\nu}(j_{\nu,l}r) r \ud r = -j_{\nu,l}^2 \int_0^1 W^2_{\nu,k}(r) J_{\nu}(j_{\nu,l}r) r\ud r.
\end{equation*} Combining this equality with (\ref{eq:W2integrals}) and (\ref{eq:W2integarlsecond}) yields
\begin{equation}
\left( 1 - \frac{j_{\nu,l}^2}{j_{\nu,k}^2} \right) \int_0^1 r^3 J_{\nu}(j_{\nu,k}r)J_{\nu}(j_{\nu,l}r) \ud r = \frac{4}{j_{\nu,k}} \int_0^1 r^2 J_{\nu-1}(j_{\nu,k}r) J_{\nu}(j_{\nu,l}r)  \ud r.
\label{eq:Besselintegral3}
\end{equation} To calculate the last integral, let us define
\begin{equation}
W^1_{\nu-1,k}(r):= r J_{\nu -1}(j_{\nu,k}r), \quad \forall r\in(0,1).
\label{eq:W1}
\end{equation} According to (\ref{eq:Besseldifferentialequation}), we have, for every $r\in (0,1)$,
\begin{equation*}
\frac{\ud^2}{\ud r^2} W^1_{\nu-1,k}(r)- \frac{1}{r} \frac{\ud}{\ud r} W^1_{\nu-1,k}(r) + \left( j_{\nu,k}^2 + \frac{1-(\nu-1)^2}{r^2} \right) W^1_{\nu-1,k}(r) = 0.
\end{equation*} Then,
\begin{eqnarray}
&& \int_0^1 r^2 J_{\nu-1}(j_{\nu,k}r) J_{\nu}(j_{\nu,l}r) \ud r = \int_0^1 W^1_{\nu-1,k}(r) J_{\nu}(j_{\nu,l}r) r \ud r \nonumber \\
&& = -\frac{1}{j_{\nu,k}^2} \int_0^1 \left( \frac{\ud^2}{\ud r^2} -\frac{1}{r}\frac{\ud }{\ud r} + \frac{2\nu -\nu^2}{r^2} \right)  W^1_{\nu-1,k}(r) J_{\nu}(j_{\nu,l}r) r\ud r \nonumber \\
&& = -\frac{1}{j_{\nu,k}^2} \left( \frac{\ud^2}{\ud r^2} + \frac{1}{r}\frac{\ud}{\ud r} - \frac{\nu^2}{r^2} \right) W_{\nu-1,k}^1(r) J_{\nu}(j_{\nu,l}r) r\ud r  \nonumber \\
&& \quad \quad \quad \quad \quad \quad + \frac{1}{j_{\nu,k}^2} \int_0^1 \left( \frac{2}{r} \frac{\ud}{\ud r} - \frac{2\nu}{r^2}  \right) W_{\nu-1,k}^1(r) J_{\nu}(j_{\nu,l}r) r\ud r. \nonumber 
\end{eqnarray} Integrating by parts, and recalling that $J_{\nu}(0)=0$, for any $\nu \in \N^*$, we find 
\begin{align}
& - \frac{1}{j_{\nu,k}^2} \int_0^1 \left( \frac{\ud^2}{\ud r^2} + \frac{1}{r}\frac{\ud}{\ud r} - \frac{\nu^2}{r^2} \right) W_{\nu-1,k}^1(r) J_{\nu}(j_{\nu,l}r) r\ud r \nonumber \\
& \quad \quad = - \frac{1}{j_{\nu,k}^2} \int_0^1 W_{\nu-1,k}^1(r) \left( \frac{\ud^2}{\ud r^2} + \frac{1}{r}\frac{\ud}{\ud r} - \frac{\nu^2}{r^2} \right)J_{\nu}(j_{\nu,l}r) \ud r \nonumber \\
& \quad \quad \quad \quad \quad \quad \quad \quad \quad \quad \quad  + \frac{j_{\nu,l}}{j_{\nu,k}^2} W_{\nu-1,k}^1(1) J_{\nu}'(j_{\nu,l}) \nonumber  \\
& \quad \quad = \left( \frac{j_{\nu,l}}{j_{\nu,k}} \right)^2 \int_0^1 W_{\nu-1,k}^1(r) J_{\nu}(j_{\nu,l}r) r \ud r + \frac{j_{\nu,l}}{j_{\nu,k}^2} J_{\nu-1}(j_{\nu,k}) J_{\nu}'(j_{\nu,l}). \nonumber 
\end{align} This gives
\begin{align}
&\left( 1 - \frac{j_{\nu,l}^2}{j_{\nu,k}^2}  \right) \int_0^1  W_{\nu-1,k}^1(r) J_{\nu}(j_{\nu,l}r) r \ud r \nonumber \\
& \quad \quad  \quad \quad =  \frac{1}{j_{\nu,k}^2} \int_0^1 \left( \frac{2}{r}\frac{\ud}{\ud r} - \frac{2\nu}{r}  \right) W_{\nu-1,k}^1(r)J_{\nu}(j_{\nu,l}r) r \ud r \label{eq:W1integral} \\
& \quad \quad  \quad \quad \quad \quad  \quad \quad  \quad \quad + \frac{j_{\nu,l}}{j_{\nu,k}^2} J_{\nu-1}(j_{\nu,k}) J_{\nu}'(j_{\nu,l}). \nonumber  
\end{align} We treat the last integral separately. Integrating by parts and using (\ref{eq:W1}) and (\ref{eq:derivativeofJnu}), it comes
\begin{eqnarray}
&& \int_0^1 \left( \frac{2}{r} \frac{\ud}{\ud r} - \frac{2\nu}{r^2} \right) W_{\nu-1,k}^1(r) J_{\nu}(j_{\nu,l}r) r \ud r \nonumber\\
&& = -2j_{\nu,l} \int_0^1 W_{\nu-1,k}^1(r) J_{\nu}'(j_{\nu,l}r) \ud r -2\nu \int_0^1 J_{\nu-1}(j_{\nu,k}r) J_{\nu}(j_{\nu,l}r) \ud r \nonumber \\
&& = -2 j_{\nu,l} \int_0^1 r J_{\nu-1}(j_{\nu,k}r) \left[ J_{\nu-1}(j_{\nu,l}r) - \frac{\nu}{j_{\nu,l}r} J_{\nu}(j_{\nu,l}r)  \right] \ud r \nonumber \\
&& \quad \quad \quad \quad -2\nu \int_0^1 J_{\nu-1}(j_{\nu,k}r) J_{\nu}(j_{\nu,l}r) \ud r \nonumber \\
&& = -2j_{\nu,l} \int_0^1 r J_{\nu-1}(j_{\nu,k}r) J_{\nu-1}(j_{\nu,l}r) \ud r. \nonumber
\end{eqnarray} Hence, using (\ref{eq:integralBessel}), we deduce.
\begin{equation*}
\int_0^1 \left( \frac{2}{r} \frac{\ud}{\ud r} - \frac{2\nu}{r^2} \right) W_{\nu-1,k}^1(r) J_{\nu}(j_{\nu,l}r) r \ud r = 0.
\end{equation*} Consequently, from (\ref{eq:W1integral}), we get
\begin{equation*}
 \left( 1 - \frac{j_{\nu,l}^2}{j_{\nu,k}^2}  \right) \int_0^1 W_{\nu-1,k}^1(r) J_{\nu}(j_{\nu,l}r) r \ud r =  \frac{j_{\nu,l}}{j_{\nu,k}^2}  J_{\nu-1}(j_{\nu,k}) J_{\nu}'(j_{\nu,l}).
\end{equation*} Combining this with (\ref{eq:Besselintegral3}) we find

\begin{equation*}
\left( 1 - \frac{j_{\nu,l}^2}{j_{\nu,k}^2}  \right) \int_0^1 r^3 J_{\nu}(j_{\nu,k}r) J_{\nu}(j_{\nu,l}r) \ud r = \frac{4 j_{\nu,l} J_{\nu-1}(j_{\nu,k}) J_{\nu}'(j_{\nu,l})  }{j_{\nu,k} \left( j_{\nu,k}^2 - j_{\nu,l}^2  \right)}
\end{equation*} Hence, this yields (\ref{eq:Besselsecondformula}), since (\ref{eq:derivativeofJnu}) and (\ref{eq:derivativeplusofJnu}) imply that $J_{\nu}'(j_{\nu,k})= -J_{\nu+1}(j_{\nu,k}) $ and $J_{\nu-1}(j_{\nu,l}) = -J_{\nu+1}(j_{\nu,l})$. 
\end{proof}

\subsection{Resolution of the moment problem}
\label{sec:resolutionofthemomentproblem}

\begin{proof}[Proof of Proposition \ref{prop:linearisedproblemiscontrollable}]
We observe that the trigonometric moment problem (\ref{eq:problemedemomentsaresoudre}) can be solved by using Proposition \ref{prop:moment}. 
In order to justify this, we claim that there exist $C_1,C_2,C_3,D_1,D_2,D_3,$ positive constants such that 
\begin{equation}
C_1 \leq j_{0,k}^3 |a_k| \leq D_1 , \quad C_2 \leq j_{0,k}^3|b_k| \leq D_2, \quad C_3 \leq j_{0,k}^3|c_k| \leq D_2, \quad \forall k\in \N^*.
\label{eq:abcasymptotics}
\end{equation} Indeed, let $k > 1$, the case $k=1$ being straightforward. Identity (\ref{eq:Besselsecondformula}) with $\nu =0$, $l=1$, allows to write, through (\ref{eq:abc}) and (\ref{eq:eigenfunctions}), that 
\begin{align}
|a_k| & = \frac{1}{|J_1(j_{0,1})||J_1(j_{0,k})|} \left| \int_0^1 r^3 J_0(j_{0,1}r) J_0(j_{0,k}) \ud r \right| \nonumber \\
& = \frac{4 j_{0,1} j_{0,k}}{\left( j_{0,k} - j_{0,1} \right)^2\left( j_{0,k} + j_{0,1} \right)^2}, \nonumber 
\end{align} and thus, 
\begin{align}
j_{0,k}^3|a_k| & = \frac{4 j_{0,1} j_{0,k}^4}{\left( j_{0,k} - j_{0,1} \right)^2\left( j_{0,k} + j_{0,1} \right)^2}, \nonumber \\
               & \geq 4j_{0,1} \frac{\left( j_{0,k} - j_{0,1}\right)^2}{\left( j_{0,k} - j_{0,1}\right)^2} \frac{j_{0,k}^2}{\left( j_{0,k} + j_{0,k}  \right)^2} \geq j_{0,1}.\nonumber
\end{align} The majoration follows by the same arguments. Then, (\ref{eq:abcasymptotics}) is proved for $\left\{ a_k \right\}_{k\in \N^*}$. 
Let us observe that the other two cases can be done in the same way. \par 

In addition, assumption $\Psi_f \in H^3_{(0),rad}(D,\comp)$, combined with (\ref{eq:abcasymptotics}), gives that $d:= \left\{ d_k \right\}_{k\in \N} \in \ell^2_r(\N;\comp)$. \par 
This allows to apply Proposition \ref{prop:moment}, which provides a function $w := \mathcal{M}(0,d) \in L^2(0,T;\R)$ with 
\begin{equation}
\int_0^T w(\sigma) \ud \sigma =0, \quad \int_0^T \sigma w(\sigma) \ud \sigma = 0.
\label{eq:moyennenulle}
\end{equation} Consequently, setting 
\begin{equation}
t \mapsto v(t):= \int_0^t w(\sigma) \ud \sigma 
\end{equation} we find a control $v \in \dot{H}_0^1(0,T;\R)$ solving (\ref{eq:problemedemomentsaresoudre}). Moreover, the application $(0,\Psi_f) \mapsto v$ thus defined is continuous, thanks to Proposition \ref{prop:moment}.
\end{proof}


\section{Proof of Theorem \ref{thm:controllability2d}}
\label{sec: Schroconclusion}
Following \cite[Section 2.4]{KBCL10}, Theorem \ref{thm:controllability2d} 
is a consequence of the Inverse Mapping Theorem, combining Proposition \ref{prop:linearisedproblemiscontrollable} and Proposition \ref{proposition:C1}.
We omit the details.


\section{Comments and perspectives}
\label{sec: Schrocomments}
In this paper we have proved a controllability result via domain deformations for the Schr\"{o}dinger equation in the unit disc of $\R^2$. This work, the first of this kind in a two-dimensional domain, shows that the geometry of the domain under study is essential. Indeed, our result is possible thanks to the particular geometry of the disc, which allows to exploit the radial symmetry. This yields a simplified situation to which the tools from one-dimensional bilinear control can be adapted. Even if some extensions in this direction are still possible, this feature of our result seems quite limiting.\par 

On the other hand, a major difficulty of this result was to determine the functional framework in which controllability holds. This has been done thanks to a careful analysis of the spectral family given by the Bessel functions. \par  

Any advance in a more general setting would be utterly interesting. The consideration of more general domains and data may lead, very likely, to the use of more general controls, probably space-dependent. Consequently, the tools from bilinear control, very useful in the one-dimensional case and in the present work, will be no longer convenient, in profit of other approaches.

\vspace{2.5em}

\thanks{  \textbf{Acknowledgements:}
The author would like to very much thank Karine Beauchard (Ecole Normale Sup\'{e}rieure de Rennes) for suggesting him this problem and for many fruitful and stimulating discussions and helpful advices. This article has been prepared at the Centre de Math\'ematiques Laurent Schwartz, Ecole polytechnique, Palaiseau, France, as part of the author's PhD dissertation.
}


\begin{appendix}

\section{Bessel functions}
\label{sec: appendixBessel}

Let $\nu \in \R$. We denote the Bessel function of order $\nu$ of the first kind by $J_{\nu}$ ( see \cite[9.1.10, p.360]{AS}), which satisfies the ordinary differential equation
\begin{equation}
z^2\frac{\ud^2}{\ud z^2}J_{\nu}(z) + z\frac{\ud}{\ud z} J_{\nu}(z) + (z^2 - \nu^2) J_{\nu}(z) = 0, \quad z\in (0,+\infty).
\label{eq:Besseldifferentialequation}
\end{equation}
\subsection{Properties of the zeros}
We denote by $\left\{j_{\nu,k} \right\}_{k \in \N^*}$ the increasing sequence of zeros of $J_{\nu}$, which are real for any $\nu \geq 0$ and enjoy the following properties (see \cite[9.5.2, p.360]{AS} and  \cite[Lemma 7.8, p.135]{KomLor}).
\begin{align}
& \nu < j_{\nu,k} < j_{\nu, k+1}, \quad \forall k \in \N^*,  \label{eq:decreasingzeros} \\
& j_{\nu,k} \rightarrow +\infty, \quad \textrm{ as } k \rightarrow +\infty, \label{eq:zerostendtoninfty} \\
& j_{\nu, k+1} - j_{\nu,k} \rightarrow \pi, \quad \textrm{ as } k \rightarrow \infty. \label{eq:convergingzeros} \\
& \left( j_{0,k+1} - j_{0,k} \right)_{k\in \N^*} \textrm{ is a strictly increasing sequence.} \label{eq:increasingsequenceofzeros}
\end{align}

\subsection{Integral identities}
We also have the integral formulae (\cite[11.4.5, p.485]{AS})
\begin{equation}
\int_0^1 r J_{\nu}(j_{\nu,l}r) J_{\nu}(j_{\nu,k}r) \ud r = \frac{1}{2}|J_{\nu +1}(j_{\nu,k})|^2 \delta_{l,k}, \quad \forall l,k \in \N^*. 
\label{eq:Besselorthogonality} 
\end{equation} and (see \cite[11.3.29, p. 484]{AS})
\begin{equation}
(\alpha^2 - \beta^2)\int_0^1 r J_{\nu}(\alpha r) J_{\nu}(\beta r) \ud r = \alpha J_{\nu+1}(\alpha)J_{\nu}(\beta) - \beta J_{\nu}(\alpha)J_{\nu+1}(\beta), 
\label{eq:integralBessel}
\end{equation} for any $\alpha, \beta\in \R,$ with $ \alpha \not = \beta$. We have the differential relations (see \cite[9.1.27, p. 361]{AS})
\begin{eqnarray}
&& J_{\nu}'(r) = -J_{\nu+1}(r) +\frac{\nu}{r}J_{\nu}(r), \quad r \in (0,+\infty),\label{eq:derivativeplusofJnu} \\
&& J_{\nu}'(r) = J_{\nu-1}(r) -\frac{\nu}{r}J_{\nu}(r), \quad r \in (0,+\infty). \label{eq:derivativeofJnu} 
\end{eqnarray}

\section{Moment problems}
\label{sec: appendixMoment}
In this section we gather some classical material concerning abstract moment problems in Hilbert space and trigonometric moment problems in $L^2(0,T;\comp)$, that have been used in section \ref{sec: Schrolinearisecontrolable}.  

\subsection{Abstract moment problems}
Let $ H$ be a separable Hilbert space, equipped with the scalar product $\langle \cdot , \cdot \rangle_H$,  and let $\mathcal{S} = \left\{ f_k \right\}_{k\in \Z}\subset H$ be a family of elements of $H$. Given a sequence of complex numbers $\left\{ c_k \right\}_{k\in \Z}$, we want to determine whether the moment problem
\begin{equation}
\langle f, f_k \rangle_H = c_k, \quad \forall k \in \Z,
\label{eq:abstractmomentproblem}
\end{equation}can be solved for some element $f\in H$. In particular we study the moment set associated to $\mathcal{S}$, which is defined (see \cite[Ch.4 Sect.2, p.128]{Young}) by
\begin{equation*}
\mathfrak{M}_H(\mathcal{S}) := \left\{ \left\{ \langle g, f_k \rangle_H \right\}_{k\in \Z}; \quad g\in H \right\} \subset \comp^{\Z}.
\end{equation*} Let us notice that, in practice, we are interested in solving the moment problem (\ref{eq:abstractmomentproblem}) 
for $\left\{ c_j \right\}_{k\in \Z}\in \ell^2(\Z;\comp)$, i.e., we want $\ell^2(\Z;\comp) \subset \mathfrak{M}_H(\mathcal{S})$. 
This necessitates some conditions on the family $\mathcal{S}$, that we briefly describe below.

\begin{definition}
Let $H$ be a separable Hilbert space and let $\mathcal{S} = \left\{ f_k \right\}_{k\in \Z}\subset H$. Then,
\begin{enumerate}
\item $\mathcal{S}$ is a minimal family in $H$ if 
\begin{equation*}
\forall j \in \Z, \quad f_j \not \in Adh_H \left( span \left\{ f_k; \quad k\not = j \right\}\right),
\end{equation*}

\item $\mathcal{S}$ is a Riesz basis of $H$ if there exists an orthonormal basis of $H$, say $\left\{ e_k \right\}_{k\in \Z}$, and a linear and bounded mapping $T \in \mathscr{L}(H)$ which is invertible and satisfies 
\begin{equation*}
Te_k = f_k, \quad \forall k \in \Z.
\end{equation*} 

\item $\mathcal{S}$ satisfies the Riesz-Fischer property in $H$ if 
\begin{equation*}
\ell^2(\Z;\comp) \subset \mathfrak{M}_H(\mathcal{S}).
\end{equation*}
\end{enumerate}

\label{definition:minimalRiesz}
\end{definition}

\begin{remark}
Observe that it follows from the previous definition that any Riesz basis of a separable Hilbert space $H$ is also a minimal family in $H$.
\label{remark:Riesz basisareminimal} 
\end{remark}

The following result provides two powerful criteria to check whether a given family $\mathcal{S} \subset H$ 
is a Riesz basis or satisfies the Riesz-Fischer property in $H$.

\begin{thm}
Let $H$ be a separable Hilbert space and let $\mathcal{S} = \left\{ f_k \right\}_{k\in \Z}\subset H$. Then,
\begin{enumerate}

\item \cite[Ch.4 Sect.2. Th.3, p.129]{Young} $\mathcal{S}$ satisfies the Riesz-Fischer property if there exists a constant $m>0$ such that the inequality
\begin{equation*}
m \sum_{k\in \Z} |c_k|^2 \leq \left\| \sum_{k\in \Z} c_k f_k   \right\|^2_H,
\end{equation*} holds for any $\left\{c_k\right\}_{k\in \Z} \subset \comp^{\Z}$ with finite support.

\item \cite[Ch.1 Sect.8. Th.9, (3) p.27]{Young} $\mathcal{S}$ is a Riesz basis if there exist $M,m>0$ such that the inequality
\begin{equation}
m \sum_{k\in \Z} |c_k|^2 \leq \left\| \sum_{k\in \Z} c_k f_k   \right\|^2_H \leq M \sum_{k\in \Z} |c_k|^2
\label{eq:Rieszcriterium}
\end{equation} holds for any $\left\{c_k\right\}_{k\in \Z} \subset \comp^{\Z}$ with finite support.

\item \cite[Ch.1 Sect.8. Th.9, (5) p.27]{Young} $\mathcal{S}$ is a Riesz basis if and only if 
one has $Adh_H \left( span S \right) = H$\footnote{In this case 
the familiy is called complete in $H$ (see \cite[Ch.1, Sect.5 p.16]{Young}).} and there exists a family, say 
$S^{\perp} = \left\{ g_k  \right\}_{k \in \Z^*} \subset H$, satisfying that $Adh_H \left(span S^{\perp} \right) = H$ and such that\footnote{Such
 a family $S^{\perp}$ is called biorthogonal to S (see \cite[Ch.1 Sect.5 p.24]{Young}).} 
\begin{equation*}
 \langle g_n , f_k \rangle = \delta_{n,k}, \quad \forall n,k \in \Z.
\end{equation*}
\end{enumerate}
\label{thm:abstractRieszbasis}
\end{thm}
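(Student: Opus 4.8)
The three assertions are classical results on Riesz bases in a separable Hilbert space, for which I would ultimately refer to \cite{Young}; my plan is to organise the standard proofs around the \emph{synthesis operator} $V\colon c\mapsto\sum_{k}c_k f_k$ and its adjoint, the \emph{analysis operator} $V^{*}\colon f\mapsto\{\langle f,f_k\rangle_H\}_k$, the hypotheses being precisely what is needed to control these two maps. The skeletons are as follows.

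\emph{Point (1).} The lower bound $m\sum_k|c_k|^2\le\|\sum_k c_k f_k\|_H^2$ on finitely supported sequences first forces $\mathcal{S}$ to be linearly independent, so one may define unambiguously a linear functional $\Lambda$ on $\operatorname{span}\mathcal{S}$ by $\Lambda(\sum_k a_k f_k):=\sum_k a_k\overline{c_k}$. Cauchy--Schwarz in $\ell^2$ followed by the hypothesis gives
\begin{equation*}
\Bigl|\Lambda\Bigl(\sum_k a_k f_k\Bigr)\Bigr|\le\|a\|_{\ell^2}\,\|c\|_{\ell^2}\le m^{-1/2}\,\Bigl\|\sum_k a_k f_k\Bigr\|_H\,\|c\|_{\ell^2},
\end{equation*}
so $\Lambda$ is bounded on $\operatorname{span}\mathcal{S}$; I would extend it by continuity to $\overline{\operatorname{span}}\,\mathcal{S}$ and by $0$ on the orthogonal complement, then invoke the Riesz representation theorem to obtain $f\in H$ with $\Lambda(\cdot)=\langle\cdot,f\rangle_H$. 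Testing against $f_k$ yields $\overline{c_k}=\langle f_k,f\rangle_H$, i.e. $\langle f,f_k\rangle_H=c_k$, which is exactly the Riesz--Fischer property.

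\emph{Point (2).} The upper bound in (\ref{eq:Rieszcriterium}) says that $V$, a priori defined on finitely supported sequences, is bounded for the $\ell^2$-norm, hence extends to a bounded operator $V\colon\ell^2(\Z)\to H$; passing to the limit, the lower bound persists on all of $\ell^2$, so $V$ is bounded below. Consequently $V$ is injective with closed range $H_0:=V(\ell^2)=\overline{\operatorname{span}}\,\mathcal{S}$, hence a topological isomorphism of $\ell^2(\Z)$ onto $H_0$. Choosing any isometric isomorphism $W\colon\ell^2(\Z)\to H_0$, setting $e_k:=W\varepsilon_k$ (an orthonormal basis of $H_0$, with $(\varepsilon_k)$ the canonical basis of $\ell^2$) and $T:=VW^{-1}\in\mathscr{L}(H_0)$, one gets $T$ bounded and invertible with $Te_k=f_k$, so $\mathcal{S}$ is a Riesz basis of $\overline{\operatorname{span}}\,\mathcal{S}$ in the sense of Definition \ref{definition:minimalRiesz} — which equals $H$ whenever $\mathcal{S}$ is moreover complete.

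\emph{Point (3).} For the direct implication, write $f_k=Te_k$ with $\{e_k\}$ an orthonormal basis of $H$ and $T\in\mathscr{L}(H)$ invertible; since $T$ is a homeomorphism, $\overline{\operatorname{span}}\,\mathcal{S}=T\bigl(\overline{\operatorname{span}}\{e_k\}\bigr)=T(H)=H$, and $g_k:=(T^{-1})^{*}e_k$ satisfies $\langle g_n,f_k\rangle_H=\langle e_n,T^{-1}Te_k\rangle_H=\delta_{n,k}$, while $\{g_k\}$, the image of an orthonormal basis under the invertible operator $(T^{-1})^{*}$, is itself a Riesz basis and in particular complete. The converse is the substantial direction and the step I expect to be the main obstacle: from a complete family $\mathcal{S}$ admitting a complete biorthogonal family one must produce the operator $T$, and completeness of the two families has to be supplemented by the Bessel bounds $\sum_k|\langle f,f_k\rangle_H|^2<\infty$ and $\sum_k|\langle f,g_k\rangle_H|^2<\infty$ for every $f\in H$, which is the genuine content of Young's theorem. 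Granting these, biorthogonality makes the partial-sum operators $f\mapsto\sum_{|k|\le N}\langle f,g_k\rangle_H f_k$ well defined, completeness pins down the coefficients, and a uniform boundedness argument upgrades their convergence to the two-sided estimate (\ref{eq:Rieszcriterium}), after which Point (2) applies. This final passage is the part I would cite from \cite{Young} rather than reconstruct.
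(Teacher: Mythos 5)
The paper offers no proof of this theorem: it appears in the appendix as a collection of classical facts, each point carrying only a citation to \cite{Young}, so there is no argument of the author's to measure yours against. Your reconstructions are essentially correct where you carry them out. The Riesz-representation argument for point (1) is the standard proof of the Riesz--Fischer criterion and is complete as sketched. The synthesis-operator argument for point (2) is sound, and you are right to flag that the two-sided inequality alone only makes $\mathcal{S}$ a Riesz basis of $\overline{\operatorname{span}}\,\mathcal{S}$: Young's Theorem 9 includes completeness as a hypothesis in its condition (3), which the paper's restatement drops; this is harmless in context, since the paper only ever applies the criterion to the closed span $H_T$ in Proposition \ref{proposition:constructionofaRieszbasis}. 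The forward direction of point (3) via $g_k=(T^{-1})^{*}e_k$ is correct. Your most valuable observation is that the ``if'' direction of point (3) as printed is genuinely false without the Bessel conditions $\sum_k|\langle f,f_k\rangle_H|^2<\infty$ and $\sum_k|\langle f,g_k\rangle_H|^2<\infty$ appearing in Young's condition (5) but omitted from the paper's transcription: taking $f_k=e_k/(1+|k|)$ with biorthogonal family $g_k=(1+|k|)e_k$ gives two complete, mutually biorthogonal families neither of which is a Riesz basis, since the frame bounds fail. Deferring the remaining uniform-boundedness step of that converse to \cite{Young} is exactly what the paper itself does for the whole theorem, so nothing is missing relative to the source; if anything, your write-up corrects two imprecisions in the statement as reproduced here.
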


\begin{remark}
The previous result shows that if $\mathcal{S}$ is a Riesz basis of $H$, 
then it satisfies the Riesz-Fischer property in $H$. Thus, in particular, $\ell^2(\Z,\comp) \subset \mathfrak{M}_H(\mathcal{S})$.  
This allows to deduce that if $S$ is a Riesz basis of $H$, then the moment problem (\ref{eq:abstractmomentproblem}) can be solved in $H$ for a given 
$\left\{ c_k \right\}_{k \in \Z} \in \ell^2(\Z; \comp)$.
\label{remark:Rieszbasissovlemomentproblems}
\end{remark}

\subsection{Trigonometric moment problems}
Let us focus next on the choice $H=L^2(0,T;\comp)$, for some $T>0$. Let us consider families given in the form
\begin{equation*}
\mathcal{S} = \left\{ t \mapsto e^{i\omega_k t}, \, k \in \Z   \right\} \subset L^2(0,T;\comp), \quad \textrm{with} \quad \left\{ \omega_k   \right\}_{k\in \Z} \subset \R^{\Z}.
\end{equation*} In order to determine if such a family $\mathcal{S}$ is a Riesz basis of $H$, it is crucial to analyse the separation properties of 
the frequences $\left\{ \omega_k   \right\}_{k\in \Z}$, as this allows to fulfill (\ref{eq:Rieszcriterium}) through an 
Ingham-type inequality (see \cite{Ingham}). We shall recall next a classical result due to A.Haraux (see \cite{Haraux} and \cite[Sect. 4.4, p.69]{KomLor} for a proof).

\begin{thm}[Haraux]
Let $N\in \N$ and let $\left\{ \omega_k   \right\}_{k\in \Z}$ be an increasing sequence of $\R^{\Z}$ such that the following gap conditions
\begin{align}
\omega_{k+1} - \omega_k \geq \gamma >0, \quad \forall k \in \Z, \textrm{ with }|k|\geq N, \label{eq:gapconditionHaraux}	\\
\omega_{k+1} - \omega_k \geq \rho >0, \quad \forall k \in \Z, \label{eq:gapconditionHaraux2}
\end{align} are satisfied for some $\gamma$ and $\rho$. Let
\begin{equation}
T \geq \frac{2\pi}{\gamma}.
\label{eq:tempsminimal}
\end{equation} Then, there exist $M,m>0$ such that the inequality
\begin{equation}
m \sum_{k\in \Z}|c_k|^2 \leq \int_0^T \left| \sum_{k\in \Z} c_k e^{i\omega t}  \right|^2 \ud t \leq M \sum_{k\in \Z}|c_k|^2,
\label{eq:HarauxIngham}
\end{equation} holds for any sequence $\left\{ c_k \right\}_{k\in \Z} \subset \comp^{\Z}$ with finite support. 
\label{thm:Haraux}
\end{thm}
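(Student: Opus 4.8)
The plan is to establish the two inequalities in (\ref{eq:HarauxIngham}) separately, the upper (Bessel) one being routine and the lower one the heart of the matter. For the upper bound only the global separation $\omega_{k+1}-\omega_k\geq\rho>0$ is needed: the system $\{e^{i\omega_k t}\}$ is then a Bessel sequence in $L^2(0,T)$ for every $T>0$ (for instance by the Montgomery--Vaughan inequality, or by Ingham's upper kernel estimate applied on short subintervals), which gives a constant $M=M(T,\rho)>0$ with $\int_0^T\bigl|\sum_k c_ke^{i\omega_k t}\bigr|^2\ud t\leq M\sum_k|c_k|^2$ for all finitely supported $\{c_k\}$. Everything thus reduces to the lower bound.

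For the lower bound I would first reduce to the classical Ingham inequality, which handles a globally uniform gap. Since $\omega_{k+1}-\omega_k\geq\gamma$ for $|k|\geq N$ and $\omega_{k+1}-\omega_k\geq\rho$ for all $k$, choosing $N'\geq N$ with $(2N'+2)\rho\geq\gamma$ makes the tail $\Lambda':=\{\omega_k:\ |k|>N'\}$ globally $\gamma$-separated: consecutive elements of $\Lambda'$ are $\gamma$-apart, and the single jump from $\omega_{-N'-1}$ to $\omega_{N'+1}$ is at least $(2N'+2)\rho\geq\gamma$. Ingham's theorem --- for the lower bound one tests against a nonnegative weight $\leq\mathbf{1}_{(0,T)}$ whose Fourier transform is positive at the origin, the off-diagonal terms being controlled as soon as $\gamma T>2\pi$ --- then yields $m_0,M_0>0$ with $m_0\sum_{|k|>N'}|c_k|^2\leq\int_0^T\bigl|\sum_{|k|>N'}c_ke^{i\omega_k t}\bigr|^2\ud t\leq M_0\sum_{|k|>N'}|c_k|^2$; that is, $\mathcal{G}':=\{e^{i\omega_k t}:\ |k|>N'\}$ is a Riesz sequence in $L^2(0,T)$, with closed span $H'\subset L^2(0,T)$.

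It remains to adjoin the finitely many clustered frequencies $\{\omega_k:\ |k|\leq N'\}$, whose exponentials are pairwise distinct and hence linearly independent in $L^2(0,T)$, spanning a finite-dimensional subspace $V$. I would argue by compactness: if the lower bound failed for the full family, there would be finitely supported $c^{(n)}$ with $\sum_k|c_k^{(n)}|^2=1$ and $\int_0^T\bigl|\sum_k c_k^{(n)}e^{i\omega_k t}\bigr|^2\ud t\to0$; splitting into the clustered part $P_n\in V$ and the tail part $Q_n\in H'$, a subsequence has $P_n\to P$ in $L^2(0,T)$ (finite dimension), whence $Q_n\to-P$, so $P\in V\cap H'$. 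Provided $V\cap H'=\{0\}$ this forces $P=0$, hence $\|Q_n\|_{L^2}\to0$, contradicting the tail lower bound $\|Q_n\|_{L^2}^2\geq m_0\bigl(1-\|P_n\|_{L^2}^2\bigr)\to m_0>0$. The upper bound for the full family is then immediate, its closed span being $H'$ plus the finite-dimensional space $V$. Abstractly, this is the statement that adjoining finitely many vectors to a Riesz sequence again produces a Riesz sequence once the enlarged family stays minimal, and the required minimality of $\{e^{i\omega_k t}\}_{k\in\Z}$ in $L^2(0,T)$ is exactly the condition $V\cap H'=\{0\}$.

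The main obstacle is this very merging step, and within it the borderline time $T=2\pi/\gamma$. The naive inequality $\|P_n+Q_n\|^2\gtrsim\|P_n\|^2+\|Q_n\|^2$ is \emph{false}, so the whole argument hinges on ruling out cancellation between the clustered part and the tail, i.e. on the minimality of the full exponential system in $L^2(0,T)$. For $T>2\pi/\gamma$ this holds with room to spare; at the critical value $T=2\pi/\gamma$ it needs the sharp form of Ingham's inequality together with a finer completeness-radius analysis of separated sequences. It is precisely in this endpoint case, and in the tolerance of finitely many exceptional small gaps, that Haraux's theorem improves on Ingham's original statement, and \cite{KomLor} carries out the details. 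Incidentally, for the particular frequencies $\omega_k=j_{0,n}^2-j_{0,p}^2$ arising in Section~\ref{sec: constructionofaRieszbasisSchrodinger} the restriction $T\geq2\pi/\gamma$ is lifted altogether by the Beurling-density computation performed there.
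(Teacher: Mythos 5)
The paper itself does not prove this statement: Theorem \ref{thm:Haraux} is quoted as a classical result, with the proof deferred to \cite{Haraux} and \cite[Sect.\ 4.4]{KomLor}. So your proposal can only be measured against the standard argument in those references. Its overall architecture --- upper bound from the uniform gap $\rho$, Ingham's lower bound for the re-indexed tail $\{\omega_k:|k|>N'\}$ (your observation that the single middle jump is at least $(2N'+2)\rho\geq\gamma$ is correct), then adjoining the finitely many clustered exponentials --- is the right shape. But the merging step, which you yourself flag as ``the main obstacle'', is a genuine gap rather than a deferred detail. Your compactness argument reduces the whole theorem to the claim $V\cap H'=\{0\}$, i.e.\ to the minimality of the full system $\{e^{i\omega_k t}\}_{k\in\Z}$ in $L^2(0,T)$. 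That minimality is essentially equivalent to the lower bound you are trying to prove (a Riesz sequence is minimal, and your compactness argument is precisely the converse implication given the tail estimate), so as written the proof is circular at its core: the key fact is asserted to hold ``with room to spare'' for $T>2\pi/\gamma$ without an argument, and the remaining case is handed back to \cite{KomLor}.

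The proof in Haraux/Komornik--Loreti avoids this trap by being constructive: the exceptional frequencies are adjoined one at a time using the averaging operator $f\mapsto f(t)-\frac{1}{2\varepsilon}\int_{-\varepsilon}^{\varepsilon}e^{-i\omega_{k_0}s}f(t+s)\ud s$, which annihilates the $\omega_{k_0}$-mode, multiplies the remaining coefficients by factors bounded away from zero thanks to the separation $\rho$, and maps $L^2$ of a slightly larger interval boundedly into $L^2$ of the given one. This produces the quantitative lower bound directly, with no separate minimality or completeness-radius input; if you wish to keep your compactness route you must actually prove $V\cap H'=\{0\}$, and the natural way to do so is to iterate this same operator, at which point the constructive induction is simpler. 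Two smaller points: the inequality $\|Q_n\|_{L^2}^2\geq m_0\bigl(1-\|P_n\|_{L^2}^2\bigr)$ conflates the $\ell^2$ norm of the clustered coefficients with the $L^2$ norm of $P_n$ --- the correct chain is $\|Q_n\|_{L^2}^2\geq m_0\sum_{|k|>N'}|c^{(n)}_k|^2$ combined with equivalence of norms on the finite-dimensional space $V$; and your caution about the endpoint is warranted, since the cited references establish the inequality for intervals of length strictly greater than $2\pi/\gamma$, so the ``$\geq$'' in (\ref{eq:tempsminimal}) should in any case be read as a strict inequality.
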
 Condition (\ref{eq:tempsminimal}) can be sharpened by using the following result, due to A. Beurling 
(see \cite[Th.9.2, p.174]{KomLor}).

\begin{thm}[Beurling]  
Let $\omega = \left\{ \omega_k \right\}_{k\in \Z} \subset \R^{\Z}$ satisfying (\ref{eq:gapconditionHaraux}) and (\ref{eq:gapconditionHaraux2}). 
Then, the real number 
\begin{equation}
D^+(\omega) := \lim_{r\rightarrow \infty} \frac{\max_{I \subset \R \textrm{ interval } \, |I|=r } \# \left\{ \omega_k \in I   \right\}}{r}, 
\label{eq:Polyaupperdensity}
\end{equation} called the upper density of $\omega$, is well-defined and inequality (\ref{eq:HarauxIngham}) holds for any $T \geq 2\pi D^+(\omega)$.
\label{thm:Beurling}
\end{thm}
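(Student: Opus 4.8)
The plan is to split the statement into the well-definedness of $D^+(\omega)$ and the two halves of inequality (\ref{eq:HarauxIngham}), the upper (Bessel) and lower (Riesz--Fischer) bounds, which are of entirely different natures. For well-definedness, set $n^+(r) := \max_{|I|=r}\#\{\omega_k \in I\}$; the uniform gap (\ref{eq:gapconditionHaraux2}) gives $n^+(r) \le r/\rho + 1 < \infty$, and splitting a length-$(r+s)$ interval into abutting pieces of lengths $r,s$ shows $n^+(r+s) \le n^+(r)+n^+(s)$. Fekete's subadditive lemma then yields $\lim_{r\to\infty} n^+(r)/r = \inf_{r>0} n^+(r)/r =: D^+(\omega) \le 1/\rho$, so the density is well defined and finite.

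The upper bound holds for every $T>0$ and uses only separation. I would run Ingham's averaging argument with a fixed nonnegative $\varphi \in \Cinfc(\R)$ satisfying $\varphi \ge 1$ on $[0,T]$, so that $\int_0^T |\sum_k c_k e^{i\omega_k t}|^2 \ud t \le \int_\R \varphi(t)|\sum_k c_k e^{i\omega_k t}|^2 \ud t = \sum_{k,l} c_k \ovl{c_l}\, \hat\varphi(\omega_l - \omega_k)$. Since $\hat\varphi$ is rapidly decaying and (\ref{eq:gapconditionHaraux2}) keeps the frequencies apart, a Schur/almost-diagonal estimate bounds $\sum_l |\hat\varphi(\omega_k-\omega_l)|$ uniformly in $k$, yielding the Bessel inequality $\int_0^T|\sum_k c_k e^{i\omega_k t}|^2 \ud t \le M\sum_k |c_k|^2$.

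The substance is the lower bound at the sharp time $T \ge 2\pi D^+(\omega)$, and here I would pass to the Paley--Wiener picture. Translating to the symmetric interval $(-T/2,T/2)$ and applying Plancherel, the Fourier transform identifies $L^2(-T/2,T/2)$ isometrically (up to the $2\pi$ factor) with the space $\mathrm{PW}_{T/2}$ of entire functions of exponential type $\le T/2$ lying in $L^2(\R)$, sending the moment $\langle f, e^{i\omega_k\cdot}\rangle$ to the point value $\hat f(\omega_k)$. Under this dictionary, and via the Riesz--Fischer characterisation (Theorem \ref{thm:abstractRieszbasis} together with \cite{Young}), the lower bound is \emph{equivalent} to the statement that $\{\omega_k\}$ is an interpolating sequence for $\mathrm{PW}_{T/2}$, i.e. the sampling map $F \mapsto (F(\omega_k))_k$ sends $\mathrm{PW}_{T/2}$ onto $\ell^2(\comp)$. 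The goal thus becomes Beurling's interpolation theorem: a uniformly discrete sequence with $D^+(\omega) < T/(2\pi)$ is interpolating for $\mathrm{PW}_{T/2}$.

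To establish this property I would build an explicit biorthogonal (Lagrange-type) system. Forming the canonical product $G$ with simple zeros exactly at the $\omega_k$, a Lindel\"of-type counting estimate ties the zero density $D^+(\omega)$ to the exponential type of $G$, giving type $\pi D^+(\omega)$ (as for $\sin\pi z$, where density $1$ matches type $\pi$). The functions $F_n(z) := G(z)/\bigl(G'(\omega_n)(z-\omega_n)\bigr)$ inherit this type, so they lie in $\mathrm{PW}_{T/2}$ precisely when $\pi D^+(\omega) < T/2$, and the separation (\ref{eq:gapconditionHaraux2}) combined with the type bound should yield the crucial \emph{uniform} estimate $\sup_n \|F_n\|_{L^2(\R)} < \infty$; superposing $\sum_k c_k F_k$ then solves the interpolation problem in $\ell^2$ with norm control. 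The boundary case $T = 2\pi D^+(\omega)$ is recovered by a limiting argument (and is vacuous in this paper, where $D^+(\omega)=0$ and the inequality therefore holds for all $T>0$). \textbf{The main obstacle} is exactly this quantitative entire-function analysis --- controlling the type of the canonical product by the density, and, above all, extracting the norms $\|F_n\|_{L^2(\R)}$ uniformly in $n$ --- which is the hard core of Beurling's theorem; by contrast the well-definedness, the Bessel bound, and the translation into an interpolation problem are all soft.
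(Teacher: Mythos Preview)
The paper does not prove this theorem: it is stated in Appendix~\ref{sec: appendixMoment} as a classical result and simply attributed to Beurling with a reference to \cite[Th.~9.2, p.~174]{KomLor}. There is therefore no ``paper's own proof'' to compare your proposal against.

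That said, your sketch is a faithful outline of one standard route to Beurling's theorem (Paley--Wiener duality, then interpolation via a canonical product), and you correctly isolate the genuine difficulty: the uniform $L^2$ control of the Lagrange interpolants $F_n$, which is where the density hypothesis does its real work. Two minor cautions. First, your claim that the boundary case $T = 2\pi D^+(\omega)$ follows ``by a limiting argument'' is optimistic: in general the Ingham-type lower bound can fail at the critical time, and the usual statements of Beurling's theorem require strict inequality $T > 2\pi D^+(\omega)$; the paper's use of $\geq$ is harmless only because in its application $D^+(\omega)=0$, as you note. Second, asserting that the canonical product has exponential type exactly $\pi D^+(\omega)$ is a bit too clean---one typically gets type $\le \pi D^+(\omega)+\eps$ for every $\eps>0$, which is what the strict inequality on $T$ absorbs.
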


\end{appendix}

\end{document}